\newtheorem{mydef}{Definition}[section]
\newtheorem{prop}{Proposition}[section]
\newtheorem{theorem}{Theorem}[section]
\newtheorem{lemma}{Lemma}[section]
\newtheorem{remark}{Remark}[section]
\newtheorem{assumption}[theorem]{Assumption}
\newtheorem{cor}{Corollary}[section]
\newcommand{\jent}{\mathscr{H}}
\newcommand{\ment}{\mathscr{H}}
\newcommand{\ecostt}{\cT_{R^{\ve}_{0t}}}
\newcommand{\ecostone}{\cT_{R^{\ve}_{01}}}
\newcommand{\ecostoneone}{\cT_{R_{01}}}
\newcommand{\otgrad}{\nabla^{W_2}}
\newcommand{\manif}{\RD}
\newcommand{\ei}{{\bf ETI}($\lambda,\ve,s,t$) }
\newcommand{\ti}{{\bf TI}($\lambda$) }
\newcommand{\tii}{{\bf TI}}
\newcommand{\eiuno}{{\bf ETI}($\lambda,\ve,s,1$) }
\newcommand{\eit}{{\bf ETI}($\lambda,\ve,t$) }
\newcommand{\eitd}{{\bf ETI}($2\lambda,\ve,t$) }
\newcommand{\eituno}{{\bf ETI}($\lambda,\ve,1$) }
\newcommand{\tle}{\theta_{\lambda\ve}}
\newcommand{\SP}{SP}
\newcommand{\R}{\mathbb{R}}
\newcommand{\RD}{\mathbb{R}^d}
\newcommand{\N}{\mathbb{N}}
\newcommand{\cM}{\mathcal{M}}
\newcommand{\cP}{\mathcal{P}}
\newcommand{\cT}{\mathcal{T}}
\newcommand{\bes}{\begin{equation*}}
\newcommand{\ees}{\end{equation*}}
\newcommand{\beas}{\begin{eqnarray*}}
\newcommand{\eeas}{\end{eqnarray*}}
\newcommand{\bea}{\begin{eqnarray}}
\newcommand{\eea}{\end{eqnarray}}
\newcommand{\be}{\begin{equation}}
\newcommand{\ee}{\end{equation}}
\newcommand{\ve}{\varepsilon}
\newcommand{\bbl}{\begin{block}}
\newcommand{\ebl}{\end{block}}
\newcommand{\De}{\mathrm{d}}
\title{Around the entropic Talagrand inequality}
\author{Giovanni Conforti\thanks{D\'epartement de Math\'ematiques Appliqu\'ees, \'Ecole Polytechnique, Route de Saclay, 91128, Palaiseau Cedex, France. giovanni.conforti@polytechnique.edu }, Luigia Ripani\thanks{Univ Lyon, Université Claude Bernard Lyon 1, CNRS UMR 5208, Institut Camille Jordan, 43 blvd. du 11 novembre 1918, F-69622 Villeurbanne cedex, France. ripani@math.univ-lyon1.fr}}
\begin{document}

\maketitle

\begin{abstract}
   In this article we study generalization of the classical Talagrand transport~-entropy inequality in which the Wasserstein distance is replaced by the entropic transportation cost. This class of inequalities has been introduced in the recent work \cite{conforti2017second}, in connection with the study of Schr\"odinger bridges.  We provide several equivalent characterizations in terms of reverse hypercontractivity for the heat semigroup, contractivity of the Hamilton-Jacobi-Bellman semigroup and dimension-free concentration of measure. Properties such as tensorization and relations to other functional inequalities are also investigated. In particular, we show that the inequalities studied in this article are implied by a Logarithmic Sobolev inequality and imply Talagrand inequality.
\end{abstract}

\tableofcontents

\section{Introduction and statements of the main results}

 A first probabilistic approach to transportation problems goes back to the early works of Schr\"odinger \cite{Schr,Schr32}, who was interested in finding the most likely evolution of a cloud of independent brownian particles towards a given ``unexpected" configuration. A rigorous formulation of Schr\"odinger's question is achieved through a constrained entropy minimization, known as the \emph{Schr\"odinger problem} (SP). The optimal value in (SP) measures intuitively the asymptotic probability that the particles attain the desired configuration, and is called \emph{entropic transportation cost}.  Mikami discovered in \cite{Mik04} (see also \cite{leonard2012schrodinger}) a fundamental connection with deterministic optimal transport, by showing that the Monge-Kantorovich problem (MK) may be seen as a ``small noise limit" of the Schr\"odinger problem. The study of the relations between these two transportation problems is nowadays an active field of research for at least two reasons: on the one hand the fact that (SP) provides with a regular convex approximation of (MK) has led to computational advantages \cite{cuturi2013sinkhorn, benamou2015iterative}; on the other hand the goal is understanding what is the ``stochastic" counterpart of the large body of results concerning the interplay between optimal transport, functional inequalities and curvature-like conditions \cite{gentil2015analogy, gentil2018dynamical, conforti2017second}. The present article contributes to this second line of research by studying a family of functional inequalities introduced in \cite{conforti2017second} which naturally generalizes Talagrand's transportation inequality \cite{talagrand1996transportation} to the entropic cost: for this reason we call them \emph{entropic Talagrand inequalities}. \\
We recall that a probability measure $m$ on $\RD$ satisfies Talgrand's transportation inequality with constant $C$, if for any probability measure $\mu$ we have

\be\label{talagrand} W_2^2(\mu,m) \leq C \ment(\mu|m), \ee
where $W_2^2(\cdot,\cdot)$ is the squared Wasserstein distance of order two and $\ment(\cdot|m)$ is the relative entropy w.r.t. $m$. This inequality was first introduced in \cite{talagrand1996transportation} for the Gaussian measure in the Euclidean space by Talagrand, and then generalized in \cite{otto2000generalization} by Otto and Villani.  
Later on we will adopt the notation \ti  for the classical Talagrand inequality \eqref{talagrand} with constant $C=1/\lambda$. 

To introduce the entropic version of \eqref{talagrand}, we fix a probability measure $m(\De x)=\exp(-2U(x))\De x$ and a noise parameter $\ve>0$ and consider the Langevin dynamics for $U$

\be\label{langevin} \De X_t  = -\ve \nabla U (X_t) \De t + \sqrt{\ve} \De B_t, \quad X_0 \sim m. \ee

Next, we call $R^{\ve}_{0t}$ the joint law at times ${0,t}$ of the Langevin dynamics: $R^{\ve}_{0t}$ acts as \emph{reference measure} to define the entropic transportation cost $\cT_{R^{\ve}_{0t}}(\mu,\nu)$ via the associated Schr\"odinger problem. The latter consists in minimizing the relative entropy w.r.t. the reference measure $R^{\ve}_{0t}$ over the set of couplings of $\mu$ and $\nu$. Leaving precise statements for later, let us just say that a probability measure $m$ on $\RD$ satisfies an entropic Talagrand inequality if

\[ \forall\, \mu, \quad \ecostt(\mu,m) \leq C   \ment(\mu|m) \]

or, more generally,

\[\forall\, \mu,\nu, \quad \ecostt(\mu,\nu)\leq C \ment(\mu|m)+ C'\ment(\nu|m).   \]

These inequalities are stronger than the classical Talagrand inequality since the entropic transport cost dominates the Wasserstein, see Remark \ref{domination} below. Moreover, the classical Talagrand inequality is recovered in the limit when $\ve \rightarrow 0$.  The main results of this article include equivalent characterizations of the entropic Talagrand inequalities in terms of a weak form of reverse hypercontractivity for the  semigroup associated with \eqref{langevin}, contractivity properties for the Hamilton-Jacobi-Bellmann semigroup and a dimension-free concentration property, in the spirit of \cite{gozlan2009characterization}; all these characterizations allow to recover well known results about Talagrand's inequality in the small noise limit. Furthermore, we show that the entropic Talagrand inequalities tensorize, and investigate relations with classical inequalities. In particular we extend Otto-Villani's Theorem \cite{otto2000generalization}, by showing that the entropic transportation inequality is implied by a Logarithmic Sobolev inequality, and that it implies the classical Talagrand's inequality. As a byproduct, we obtain that the entropic Talagrand inequalities hold under the celebrated Bakry-\'Emery $\Gamma_2$ condition \cite{BAKEM}. This fact has already been proven for measures on a compact Riemannian manifold in \cite{conforti2017second}.

 Transport-entropy inequalities for general costs have been studied in \cite{gozlan2017kantorovich} (and also \cite{bowles2018theory}). An observation we make here is that  (a slight modification of) the entropic cost is indeed one of those general costs. This allows us to profit from the results contained in \cite{gozlan2017kantorovich}, thus simplifying some of our proofs. Conversely, we provide a novel concrete example of functional inequality which can be treated with the methods of \cite{gozlan2017kantorovich}; moreover we  can provide explicit conditions for this inequality to hold, something which cannot be achieved for the general costs considered there. Finally, let us remark that, to streamline exposition, we limit ourselves to take $\RD$ as ambient space; however, it is very likely that the results we present here remain valid in a much wider setting.

\subsubsection*{Organization of the article}
We recall at Section 1 some basic facts about (SP) and its connections to optimal transport. In Section 2, we first introduce the class of entropic Talagrand inequalities at Definition \ref{ETIdef}, and prove two characterization results, Theorem \ref{reversehyp} and Theorem \ref{EItequivalentfroms}. Next, we investigate different forms of tensorization at Proposition \ref{tenso1}, \ref{tenso2} and \ref{tensogentil}. Then, we use these results to derive concentration of measure at Theorem \ref{concentration}. We establish at Corollary \ref{ottovillani} connections with the classical Talagrand inequality and the Logarithmic Sobolev inequality. Finally, at Corollary \ref{infconv} we show that an entropic Talagrand inequality implies an infimum convolution Logarithmic Sobolev inequality. The appendix collects some useful results which are behind most of the proof presented here.

\subsection{ Schr\"odinger problem and entropic transportation cost}
In order to define (\SP), we shall  first introduce a few notation. We fix a probability measure $m$ on $\RD$ whose density w.r.t. the Lebesgue measure is $\exp(-2 U(x))$, where $U$ is assumed to satisfy the minimal hypothesis which guarantee existence of a weak solution for the SDE \eqref{langevin}. { This is the case for instance when there exists some constant $c>0$ such that one of the following assumptions holds true:
\begin{enumerate}[(i)]
\item
$\lim _{ |x|\to \infty}U(x)= + \infty$ and $ \inf \{|\nabla U|^2- \Delta U/2\}>- \infty,$ or
\item
$-x\cdot \nabla U(x)\le c(1+|x|^2),$ for all $x\in\mathbb R^d.$
\end{enumerate}
See \cite[Thm.\,2.2.19]{Roy99} for the existence result under the assumptions (i) or (ii).}
For any $\ve>0$, we call $R^{\ve}$ the law of \eqref{langevin} on the space of continuous paths over $[0,+\infty]$ and for $t>0$ we denote $R^{\ve}_{0t}$ the law  of $R^{\ve}$ at times
$0$ and $t$ :

\[R^{\ve}_{0t}(\cdot) = R^{\ve}((X_0,X_t) \in \cdot). \]
For any measurable space $E$, we denote by $\cP(E)$ the space of probability measures over $E$ and for any $p,q\in \cP(E)$ $\Pi(p,q)$ is the set of couplings of $p$ and $q$; finally $\ment(q|p)$ is the relative entropy of $q$ w.r.t. $p$ defined as, 
$$
\ment(q|p)=\left\{\begin{array}{ll}
\int \log\frac{\De q}{\De p}\,\De q     &\; \textrm{if} \;q \ll p, \\
+\infty     & \textrm{otherwise.}
\end{array}\right.
$$
We are now in position to define (\SP).  Given two marginal laws $\mu,\nu \in \cP(\RD)$ and $\ve,t>0$, the (static) Schr\"odinger problem is the problem of finding the 
coupling of $\mu$ and $\nu$ which minimizes the relative entropy against $R^{\ve}_{0t}$,

\be\label{schrodprob}\tag{SP} \inf \{ \jent(\pi | R^{\ve}_{0t}): \pi \in \Pi(\mu,\nu) \}, \ee
 We call the optimal value in \eqref{schrodprob} the \emph{entropic transportation cost} between $\mu$ and $\nu$, and denote it $\ecostt(\mu,\nu)$.

As it is the case for the Wasserstein distance, the entropic transportation cost admits a dual formulation. It is known that if $\mu,\nu\in \mathcal P(\mathbb R^d)$ have finite relative entropy w.r.t. $m$, then
\be\label{kanto}
\ve\ecostt(\mu,\nu)=\ve \ment(\mu|m)+\sup_{\varphi\in C_b(\mathbb R^d)} \left\{ \int Q_t^\ve \varphi \,\De\mu - \int  \varphi \,\De\nu\right\}
\ee
where for all $t\geq0$, $x\in\RD$

\be\label{hjb}  Q_t^\ve\varphi(x) = \inf_{p \in \cP(\RD)}  \left\{ \int \varphi(y)  p(\De y) + \ve \ment(p|r^{\ve}_t(x,\cdot)) \right\}, \ee
where $x \mapsto r_{t}^{\ve}(x,\cdot) \in \cP(\RD)$ is the $m$-a.s. defined Markov kernel such that

\be\label{kerneldec} R^{\ve}_{0t}(\De x \De y) = m(\De x)r_t^{\ve}(x,\De y) \ee
The semigroup $(Q_t^\ve)_{t\geq0}$ is the Hamilton Jacobi Bellman (HJB) semigroup characterizing the vanishing viscosity solutions for the Hamilton Jacobi equation. Different proofs of \eqref{kanto} in more general contexts are by now available, see for instance \cite{MT06, gentil2015analogy, chen2014relation, gentil2018dynamical,gigli2018benamou}.  Introducing the linear semigroup $(P^{\ve}_t)_{t \geq 0}$ associated with \eqref{langevin} allows to give an alternative formulation of the HJB semigroup. We have

\be\label{hjb1}
Q_t^\ve\varphi(x) = -\ve\log P_t^\ve \exp(-\varphi/\ve)(x), \;\; x \in \mathbb R^d.
\ee
{ Note that \eqref{hjb1} follows from the dual representation of the entropy \eqref{eq:deco}.}

\subsection{The connection with optimal transport}

A fundamental fact is that one recovers (MK) from (SP) as a small noise (or, equivalently, short time) limit. This was first proven in \cite{Mik04} when the reference measure is a Brownian motion and in \cite{leonard2012schrodinger} in a more general case using $\Gamma$-convergence. In particular, those results imply that for all $\mu,\nu \in \mathcal P(\mathbb R^d)$ with second moment and relative entropy w.r.t $m$ finite,

\be\label{conv}
\lim_{\ve\to 0^+} \ve \ecostt(\mu,\nu)= \frac{W_2^2(\mu,\nu)}{2t}.
\ee
where $W_2(\mu,\nu)$ is Wasserstein distance of order two is defined for all $\mu,\nu \in \mathcal P(\mathbb R^d)$ with second moment as 
$$
W_2^2(\mu,\nu)=\inf_{\pi \in \Pi(\mu,\nu) }\int |x-y|^2\pi(\De x\De y) 
$$
Furthermore application of the Laplace principle \cite[Thm 4.3.1]{dembo2009large} yields 

\be\label{Laplaceprinc} \forall x \in \RD,\; \lim_{\ve \rightarrow 0}  Q_t^\ve\varphi(x) = \inf_{y \in  \RD} \{  \varphi(y) + \frac{1}{2t}|x-y|^2\}:=Q^{0}_t \varphi(x). \ee
Here $Q^{0}_t\varphi$ is nothing but the  Hopf-Lax semigroup that appears in the classical Kantorovich duality formula of optimal transport,

\be\label{OTkanto}
\frac{1}{2}W_2^2(\mu,\nu)=\sup_{\varphi\in C_b(\mathbb R^d)} \left\{ \int Q_t^0 \varphi \,\De\mu - \int  \varphi \,\De\nu\right\}.
\ee

In \cite{gozlan2017kantorovich} the authors study a general family of transportation costs. In particular, they look at costs which can be defined considering a measurable function $c:\manif \times \cP(\manif) \rightarrow [0,+\infty]$ and setting  

\be\label{gozlancosts} \cT_{c}(\nu|\mu) = \inf_{\pi \in \Pi(\mu,\nu)} \int_{\manif} c(x,p(x,\cdot)) \mu(\De x)          \ee
 where for $\pi \in \Pi(\mu,\nu)$ the map  $x \mapsto p(x,\cdot)$ is the ($\mu$-almost everywhere uniquely determined) probability kernel such that
 
 \[  \pi(\De x \De y) = \mu(\De x)p(x,\De y).     \]
We observe that if we subtract the marginal entropy of $\mu$ to the entropic transportation cost, then we fall in the set of costs \eqref{gozlancosts}. This simple fact allows us to take advantage of the results in \cite{gozlan2017kantorovich}. Inspired from their framework, we define 
\be\label{def forwardentropiccost} \ecostt(\nu |\mu) = \inf\left\{ \int_{\RD} \ment(p(x,\cdot) |r^{\ve}_t(x,\cdot)) \mu(\De x)  : \pi \in \Pi(\mu,\nu) \right\}, \ee
which is nothing but the cost \eqref{gozlancosts} with the choice 

\be\label{entropygozlan2} c(x,p) = \ment(p| r^{\ve}_t(x,\cdot)). \ee

\begin{lemma}\label{entropygozlan}
For all $\mu,\nu$ such that $\ment(\mu|m)<+\infty$ we have that\footnote{We adopt the standard convention that $+\infty-c = +\infty$, if $c<+\infty$}  

\be\label{entropygozlaneq} \ecostt(\mu,\nu)-\ment(\mu|m) = \ecostt(\nu |\mu), \ee

\end{lemma}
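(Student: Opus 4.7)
The plan is to reduce the identity to the chain rule (additivity) for relative entropy under disintegration, and then take the infimum over couplings. The key observation is that every $\pi\in\Pi(\mu,\nu)$ can be disintegrated against its first marginal, $\pi(\De x\De y)=\mu(\De x)p(x,\De y)$, while the reference measure admits the analogous disintegration $R^{\ve}_{0t}(\De x\De y)=m(\De x)r^{\ve}_{t}(x,\De y)$ given in \eqref{kerneldec}.

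First I would establish the identity
\be\label{plan:chain}
\ment(\pi\,|\,R^{\ve}_{0t})=\ment(\mu\,|\,m)+\int_{\RD}\ment\bigl(p(x,\cdot)\,\bigl|\,r^{\ve}_t(x,\cdot)\bigr)\,\mu(\De x)
\ee
for every $\pi\in\Pi(\mu,\nu)$. When $\pi\ll R^{\ve}_{0t}$, writing $\mu=f\,m$ with $f$ the Radon-Nikodym density (finite $m$-a.e.\ since $\ment(\mu|m)<+\infty$) and $p(x,\cdot)=g(x,\cdot)\,r^{\ve}_t(x,\cdot)$, one has $\De\pi/\De R^{\ve}_{0t}(x,y)=f(x)g(x,y)$; \eqref{plan:chain} then follows by splitting $\log(fg)=\log f+\log g$ and integrating, using $\int g(x,y)\,r^{\ve}_t(x,\De y)=1$ for $\mu$-a.e.\ $x$.

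Next I would handle the singular case: if $\pi\not\ll R^{\ve}_{0t}$, then since $\mu\ll m$, the singularity must come from the conditional kernel, i.e.\ on a set of positive $\mu$-mass one has $p(x,\cdot)\not\ll r^{\ve}_t(x,\cdot)$; this forces the integral in \eqref{plan:chain} to be $+\infty$, matching the left-hand side. A similar argument handles the case $\pi\ll R^{\ve}_{0t}$ with infinite entropy. Thus \eqref{plan:chain} holds in the extended sense $[0,+\infty]$ for every $\pi\in\Pi(\mu,\nu)$.

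Finally, since $\ment(\mu|m)$ is a finite constant independent of $\pi$, I would take the infimum of both sides of \eqref{plan:chain} over $\pi\in\Pi(\mu,\nu)$. The left-hand infimum is, by definition, $\ecostt(\mu,\nu)$, while the right-hand infimum is $\ment(\mu|m)+\ecostt(\nu|\mu)$ in view of \eqref{def forwardentropiccost}. Rearranging yields \eqref{entropygozlaneq}. The only real care needed is in the measurable selection of the disintegration kernel $p(x,\cdot)$, which is standard on Polish spaces; everything else is the standard chain rule for entropy.
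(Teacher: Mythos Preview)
Your proof is correct and follows essentially the same approach as the paper: establish the chain rule for relative entropy \eqref{plan:chain} for every coupling (the paper simply invokes Lemma~\ref{additiveppty} for this), then take the infimum over $\pi\in\Pi(\mu,\nu)$ and use that $\ment(\mu|m)$ is a finite constant. The only cosmetic difference is that you treat the extended-value case uniformly, whereas the paper splits explicitly into the cases $\ecostt(\mu,\nu)<+\infty$ and $\ecostt(\mu,\nu)=+\infty$.
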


\begin{proof}
Assume that $\ecostt(\mu,\nu)<+\infty$. In this case, the conclusion follows from the decomposition of the entropy formula (see \cite[Thm.~2.4]{leonard2014some} or Lemma \ref{additiveppty} from the appendix), valid for all $\pi \in \Pi(\mu,\nu)$

\be\label{entropygozlan1} \ment(\pi | R^{\ve}_{0t}) = \ment(\mu|m) + \int_{\RD} \ment(p(x,\cdot) | r^{\ve}_t(x,\cdot))  \mu(\De x)   \ee
and by taking the infimum on both sides. On the other hand, if $\ecostt(\mu,\nu)=+\infty$, we find from \eqref{entropygozlan1} that 
 
 \[ \forall \pi \in \Pi(\mu,\nu), \quad  \ment(\mu|m) + \int_{\RD} \ment(p(x,\cdot) | r^{\ve}_t(x,\cdot))  \mu(\De x) =+\infty.   \]
Using the fact that $\ment(\mu|m)<+\infty$, we get that $\ecostt(\nu|\mu)=+\infty$ as well, which is the desired conclusion.
\end{proof}

\begin{remark}\label{rem}
Note that the entropic transportation cost is symmetric, and together with \eqref{entropygozlaneq} it implies
\be\label{remark}
\ecostt(\mu,\nu)= \ecostt(\nu|\mu)+\ment(\mu|m)=\ecostt(\mu|\nu)+\ment(\nu|m),
\ee
and taking $\mu=m$ (or equivalently $\nu=m$), 
\be\label{remarkm}
\ecostt(m,\nu)=\ecostt(\nu,m)=\ecostt(\nu|m)=\ecostt(m|\nu)+\ment(\nu|m).
\ee
\end{remark}

\begin{remark}\label{domination}
The entropic transportation cost is larger than the quadratic Wasserstein distance. Indeed, it follows from \cite[Corollary 5.13]{gentil2015analogy} and the Benamou-Brenier formula \cite{benamou2000computational} that for all $\ve >0,\mu,\nu \in \cP(\RD)$:

\[ 
\varepsilon \ecostone(\mu,\nu) \geq \frac{\ve}{2}\ment(\mu|m) +\frac{\ve}{2} \ment(\nu|m)+\frac{1}{2} W_2^2(\mu,\nu).  
\]

\end{remark}
\section{Entropic Talagrand inequality and properties}

The family of inequalities we consider in this article has been introduced in the recent article \cite{conforti2017second} where it was shown that, on a smooth compact manifold $M$ satisfying the Bakry \'Emery condition
\be\label{eq:Bakem}  \forall x \in M, \quad \mathfrak{Ric}_x +2 \mathbf{Hess}_x\, U \geq \lambda \, \mathbf{id} \ee
we have that for all
$\mu,
\nu \in \cP(M)$, $s \in (0,1)$ and $\ve>0$:

\be\label{EI1} \ecostone(\mu,\nu) \leq \frac{1}{1-\exp(-\lambda \ve s)}\ment(\mu|m) + \frac{1}{1-\exp(-\lambda \ve (1-s))}\ment(\nu|m).   \ee
In view of Lemma \ref{entropygozlan} and \eqref{remark}, the latter is equivalent to 

\be\label{EI2} \ecostone(\nu|\mu) \leq \frac{1}{\exp(\lambda \ve s)-1}\ment(\mu|m) + \frac{1}{1-\exp(-\lambda \ve (1-s))}\ment(\nu|m). \ee
  Also, observe that setting $\nu=m$ and optimizing over $s$ in \eqref{EI1} yields

\be\label{EI4} \ecostone(\mu,m) \leq \frac{1}{1-\exp(-\lambda \ve )}\ment(\mu|m).\ee
This motivates the following definition. 
{
\begin{assumption}\label{assu}
Let $m=\exp(-2U(x))\De x \in \cP(\mathbb{R}^d)$ with $U$ such that \eqref{langevin} admits a weak solution and let $R^\ve_{0t}$ the joint law at time $0$ and $t$ of the path measure associated to \eqref{langevin}.
\end{assumption}}
  
\begin{mydef}[Entropic Talagrand inequalities]\label{ETIdef}
{Let $m \in \cP(\mathbb{R}^d)$ be such that Assumption \eqref{assu} is satisfied} and fix $\lambda>0$, $0 \leq s < t$.

\begin{enumerate}

\item[(i)] We say that $m$ satisfies the entropic Talagrand inequality {\bf ETI}($\lambda,\ve,s,t$) if for all $\mu,\nu \in \cP(\mathbb{R}^d)$, 
$$
\ecostt(\mu,\nu)\leq \frac{1}{1-\exp(-\lambda\ve s)}\ment(\mu|m)+\frac{1}{1-\exp(-\lambda\ve(t-s))}\ment(\nu|m).
$$


\item[(ii)] We say that  $m$ satisfies the entropic Talagrand inequality {\bf ETI}($\lambda,\ve,t$) if for all $\mu \in \cP(\mathbb{R}^d)$,
$$
\ecostt(\mu,m)\leq \frac{1}{1-\exp(-\lambda\ve t)}\ment(\mu|m).
$$

\end{enumerate}
\end{mydef}

Let us recall that once the measure $m$ is fixed, the law $R^{\ve}_{0t}$ is uniquely determined as the two-times marginal of the Langevin dynamics \eqref{langevin}.

{\begin{remark}
It can be deduced from the Benamou-Brenier formulation of the entropic transportation cost (see e.g. \cite{gigli2018benamou}) that the function $t\mapsto t\ecostt(\mu,\nu)$ is increasing, it can be easily verified that {\bf ETI}($\lambda,\ve,s,\tau$) for some $\tau>0$ implies 
\be\label{hierarchy}
\ecostt(\mu,\nu)\leq \frac{\tau}{t}\frac{1}{1-\exp(-\lambda\ve s)}\ment(\mu|m)+\frac{\tau}{t}\frac{1}{1-\exp(-\lambda\ve(t-s))}\ment(\nu|m).
\ee
for all $0<t<\tau$. Therefore {\bf ETI}($\lambda,\ve,s,\tau$)implies {\bf ETI}($\lambda',\ve,s,t$), where the value of $\lambda'$
can be deduced from \eqref{hierarchy}.
\end{remark}}

\subsection{Equivalent forms of the entropic Talagrand inequalities}
In this section we state and prove several equivalent characterizations of \ei and \eit in terms of reverse hypercontractivity for the heat semigroup (Thm. \ref{reversehyp}) contractivity of the HJB semigroup (Thm. \ref{EItequivalentfroms}) and dimension-free concentration of measure (Thm.\ref{concentration}).

\subsubsection*{ A weak form of reverse hypercontractivity}

To recall the notions of hypercontractivity (\cite{Nel67}) and reverse hypercontractivity we first recall the definition of the heat semigroup $(P^{\ve}_t)_{t\geq0}$ associated with \eqref{langevin},

\[\forall f>0, \quad P^{\ve}_t f(x) := \int_{\RD} f(y) r^{\ve}_{t}(x, \De y),   \]
where $r^{\ve}_{t}(x, \De y)$ is the transition kernel for $R^{\ve}_{0t}$. Note that we have the scaling relation

\be\label{timechange} \forall \ve,t>0, x\in \RD,\, f>0, \quad P^{\ve}_t f(x) =  P^{1}_{\ve t} f(x).\ee
For $f\geq 0$, $p \in \R, p\neq 0$, we set

\be\label{pnorm}    \|f \|_p := \left(\int_{\manif} f^p \De m \right)^{1/p}. \ee

{ Note that we do not ask $p>1$. For an $f>0$ such that $\log f$ is integrable, the norm $\| f\|_{0}$ is defined by

\bes
\|f \|_0 = \exp \left( \int_{\RD} \log f \De m \right).
\ees}

\begin{mydef}[Hypercontractivity and reverse hypercontractivity]
Let $\lambda, \ve>0$. The semigroup $(P^{\ve}_t)_{t \geq 0}$ is $\lambda$-hypercontractive if for all $t>0$, $p>1$ and  $f>0$ it holds that

\[ \|P^{\ve}_tf \|_{q} \leq \|f \|_{p}, \quad \text{where}\quad \frac{q-1}{p-1} = e^{2\lambda \ve t}. \]
On the other hand, $\lambda$-reverse hypercontractivity is defined asking that for all $t>0$, $p<1$ and $f>0$,

\[\|P^{\ve}_t f \|_{q} \geq \|f \|_{p}, \quad \text{where}\quad \frac{q-1}{p-1} = e^{2\lambda\ve t}. \]

\end{mydef}
Next Theorem shows that the dual form of \ei encodes a weaker form of $\frac{\lambda}{2}$-reverse hypercontractivity; we recall that Gross established in \cite{gross1975logarithmic} equivalence between the logarithmic Sobolev inequality and hypercontractivity. The equivalence between (full) reverse hypercontractivity and Log Sobolev is also knwon \cite[Thm 3.3]{bakry1994hypercontractivite}. In the proof, and in the rest of the article, we take advantage of the notation

\be\label{thetadef} \theta_{\lambda \ve}(s):= \frac{1}{1-\exp(-\lambda \ve s)}. \ee

\begin{theorem}[\ei and reverse hypercontractivity]\label{reversehyp}
{ Let $m \in \cP(\mathbb{R}^d)$ be such that Assumption \eqref{assu} is satisfied.} For $t\geq 0$ the following are equivalent

\begin{enumerate}
    \item[i)] $m$ satisfies \ei for all $s\in [0,t]$.
    
    \item[ii)] For all $f > 0$ and $p,q\in [0,1)\times (-\infty,0]$ such that $\frac{q-1}{p-1} = \exp(\lambda \ve t)$ we have
   
\begin{equation}\label{reverse-hyp}
    \| P^{\ve}_t f \|_q \geq \|f \|_p.
\end{equation}  
    
\end{enumerate}
\end{theorem}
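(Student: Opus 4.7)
My plan is to derive the equivalence by chaining two convex dualities: the Kantorovich-type formula \eqref{kanto} for $\ecostt$, and the Legendre duality between relative entropy and the logarithmic moment generating function. The strategy is to successively eliminate the marginals $\mu,\nu$ and the test function $\varphi$, reducing \ei to a norm comparison for $P_t^\ve$.

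First, using \eqref{kanto} together with the elementary identity $\tle(s)-1 = 1/(e^{\lambda\ve s}-1)$, \ei is equivalent to
\begin{equation*}
\int Q_t^\ve\varphi\,\De\mu - \int \varphi\,\De\nu \,\leq\, \frac{\ve}{e^{\lambda\ve s}-1}\ment(\mu|m) + \frac{\ve}{1-e^{-\lambda\ve(t-s)}}\ment(\nu|m),
\end{equation*}
required to hold for every $\varphi \in C_b(\RD)$ and every $\mu,\nu \in \cP(\RD)$ with finite entropy with respect to $m$. Applying the entropic duality $\sup_\mu\{\int g\,\De\mu - a\ment(\mu|m)\} = a\log\int e^{g/a}\,\De m$ (for $a > 0$) separately in each marginal, with $a := \ve/(e^{\lambda\ve s}-1)$ and $b := \ve/(1-e^{-\lambda\ve(t-s)})$, this collapses the condition to
\begin{equation*}
a\log\int \exp(Q_t^\ve\varphi/a)\,\De m + b\log\int \exp(-\varphi/b)\,\De m \,\leq\, 0 \qquad \forall\varphi \in C_b(\RD).
\end{equation*}

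Second, I would parametrise positive test functions by $f := \exp(-\varphi/\ve)$. The representation \eqref{hjb1} converts $\exp(Q_t^\ve\varphi/a)$ into $(P_t^\ve f)^{-\ve/a}$ and $\exp(-\varphi/b)$ into $f^{\ve/b}$. Setting $p := \ve/b = 1-e^{-\lambda\ve(t-s)}$ and $q := -\ve/a = 1-e^{\lambda\ve s}$, a short computation gives $(q-1)/(p-1) = e^{\lambda\ve t}$, and dividing the previous display by $\ve$ yields precisely $\log\|f\|_p \leq \log\|P_t^\ve f\|_q$. As $s$ ranges over $(0,t)$, the pair $(p,q)$ is in bijection with the interior of the set described in (ii).

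The main subtleties I expect are twofold. First, a standard approximation/truncation argument is needed to justify that restricting to bounded $\varphi$ (equivalently, to $f$ with $\log f$ bounded) suffices to characterise both sides. Second, the boundary cases $s \in \{0,t\}$, corresponding to $pq = 0$, must be handled directly: in the limit $a \to \infty$ (resp.\ $b \to \infty$) the sup in the entropic duality is attained only at $\mu = m$ (resp.\ $\nu = m$), and the same substitution delivers $\log\|f\|_p \leq \log\|P_t^\ve f\|_0$ or $\log\|f\|_0 \leq \log\|P_t^\ve f\|_q$, consistent with the convention $\|g\|_0 := \exp(\int\log g\,\De m)$. On the \ei side, these endpoints correspond exactly to \eituno, recovered by specialising $\mu=m$ or $\nu=m$.
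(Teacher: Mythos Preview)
Your proposal is correct and follows essentially the same route as the paper: both proofs combine the Kantorovich duality \eqref{kanto} with the variational formula \eqref{HJBduality2}, then substitute $f=\exp(-\varphi/\ve)$ and identify $p=1-e^{-\lambda\ve(t-s)}$, $q=1-e^{\lambda\ve s}$ to obtain the norm inequality, with the bijection between $s\in(0,t)$ and the interior of the $(p,q)$ range handled identically. One small caveat: your closing remark that the endpoints $s\in\{0,t\}$ ``correspond exactly to \eituno'' is slightly off, since \ei at $s=0$ or $s=t$ is vacuous (the constants blow up); the paper, like your limiting argument, simply recovers the boundary cases $p=0$ or $q=0$ of (ii) by approximation from the interior rather than by matching them to a separate inequality.
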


{ \begin{remark}
Because of the restrictions on $(p,q)$, we were not able to conclude that the weak form of reverse hypercontractivity of Theorem \ref{reversehyp} is equivalent to the log-Sobolev inequality. This would be true if the constraint $(p,q)\in(0,1)\times(-\infty,0)$ could be dropped, see the classical reference \cite[Thm 3.3]{bakry1994hypercontractivite}. However, we will see at Corollary~\ref{ottovillani} below that the weak reverse hypercontractivity implies a Poincar\'e inequality. 
\end{remark}}

\begin{proof}



In the proof we set for simplicity $t=1$. Inspired by \cite[Prop.~4.5]{gozlan2017kantorovich}, which generalizes some of the results in \cite{bobkov1999exponential}, we look for the dual formulation of \eiuno. First we rewrite it multiplying by $\ve$ as, 

\be\label{veei}
\forall \mu,\nu \in \cP(\RD), \quad \ve\ecostone(\mu,\nu)\leq \ve \tle(s)\ment(\mu|m)+\ve\tle(1-s)\ment(\nu|m).
\ee
The dual formulation \eqref{kanto} tells that (i)
is equivalent to say that for all $s\in(0,1)$, $\varphi \in C_b(\RD)$, $\mu,\nu \in \cP(\RD)$ we have
\[ \ve \ment(\mu|m) + \int Q_t^\ve \varphi \,\De\mu - \int  \varphi \,\De\nu \leq  \ve \tle(s)\ment(\mu|m)+\ve\tle(1-s)\ment(\nu|m).\]
Rearranging the terms, we can rewrite the latter as
\beas
\ve(\tle(s)-1)\left(\int \frac{Q_1^\ve\varphi}{\ve(\tle(s)-1)} \De\mu-\ment(\mu|m) \right)\\
+\ve\tle(1-s)\left(\int -\frac{\varphi}{\ve\tle(1-s)}\De\nu-\ment(\nu|m)   \right)\leq 0
\eeas
 We now take the suprema over $\mu$ and $\nu$ and use the variational formula \eqref{HJBduality2}, to obtain that (i) is equivalent to the fact that for all $s\in(0,1)$ and $\varphi \in C_b(\RD)$
\beas
\ve(\tle(s)-1)\log \int \exp\left(\frac{Q_1^\ve\varphi}{\ve(\tle(s)-1)}\right) \De m \\
+ \ve\tle(1-s)\log \int \exp \left(-\frac{\varphi}{\ve\tle(1-s)}\right) \De m \leq 0.
\eeas
Taking exponentials we get

\be\label{dualeit}
\left(\int \exp\left(\frac{Q_1^\ve\varphi}{\ve(\tle(s)-1)}\right) \De m \right)^{\ve(\tle(s)-1)}\left(\int \exp(-\frac{\varphi}\ve\tle(1-s)) \De m \right)^{\ve\tle(1-s)}\leq 1. 
\ee
Using \eqref{hjb1} and setting $\exp(-\varphi/\ve)=f$ we obtain
$$ 
\left(\int (P_1^\ve f)^{-1/(\tle(s)-1)}\De m\right)^{\ve(\tle(s)-1)}\left(\int f^{1/\tle(1-s)}\De m \right)^{\ve\tle(1-s)}\leq 1.
$$
Raising to the power of $1/\ve$, using \eqref{pnorm} and setting $q(\lambda \ve,s)=-1/(\tle(s)-1)$, $p(\lambda \ve,s)=1/\tle(1-s)$ we obtain a new equivalent formulation of (i) after a simple approximation argument:

\[  \forall s \in(0,1),f>0, \quad \| P^{\ve}_1 f \|_{q(\lambda \ve,s)} \geq \| f \|_{p(\lambda \ve,s)}. \]
To conclude the proof, we first observe that

\[ \Big\{ (p(\lambda \ve,s),q(\lambda \ve,s)): s\in (0,1) \Big\} =\Big\{(p,q)\in (0,1) \times (-\infty,0): \frac{q-1}{p-1} = \exp(\lambda \ve) \Big\}. \]
{ The case $p=0$ is obtained with a standard approximation argument.}
\end{proof}

The dual formulation of \ti is equivalent to some contraction properties for the Hopf-Lax semigroup, see \cite[Prop 9.2.3]{bakry2013analysis}. Here we show that \eit admits a dual formulation in terms of contraction properties for the HJB semigroup. 

\begin{theorem}[\eit and the HJB semigroup]\label{EItequivalentfroms}
{ Let $m \in \cP(\mathbb{R}^d)$ be such that Assumption \eqref{assu} is satisfied.} The following are equivalent
\begin{enumerate}
    \item[(i)] \eit  holds;
    \item[(ii)] For all $\varphi \in C_b(\RD)$,
  
    $$
    \int\exp\left(-\frac{1}{\ve\tle(t)}\varphi\right)\De m\leq \exp\left( -\frac{1}{\ve\tle(t)} \int Q_t^\ve \varphi \De m\right);
    $$
    \item[(iii)] For all $\psi \in C_b(\RD)$,
    \be\label{HJBcontraction1}  \int \exp\left( Q^{\ve/C}_{ C t}\, \psi \right) \De m \leq  \exp\left(\int \psi \De m \right)   \ee
where 
\[C=\ve(\tle(t)-1). \]
\end{enumerate}
\end{theorem}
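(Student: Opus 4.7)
The plan is to deduce both (ii) and (iii) from (i) by two parallel applications of the Kantorovich-type duality \eqref{kanto}, combined with the standard variational (Donsker--Varadhan) formula for relative entropy
\[ \log \int e^{f}\,\De m \;=\; \sup_{\mu}\left\{\int f\,\De\mu \;-\; \ment(\mu|m)\right\}, \]
in the spirit of the proof of Theorem~\ref{reversehyp} just given. The key observation is that \eqref{kanto} can be exploited in two complementary ways by using the symmetry $\ecostt(\mu,m) = \ecostt(m,\mu)$ of Remark~\ref{rem}, producing duals in which $Q_t^\ve\varphi$ is integrated either against $\mu$ or against $m$; these lead respectively to (iii) and (ii).

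For (i)~$\Leftrightarrow$~(iii), I would apply \eqref{kanto} with $\nu = m$ so that the entropy term $\ment(\nu|m)$ vanishes. The inequality \eit then rearranges to
\[ \int Q_t^\ve\varphi\,\De\mu \;-\; \int \varphi\,\De m \;\le\; C\,\ment(\mu|m), \qquad \forall\, \mu,\varphi, \]
with $C = \ve(\tle(t)-1) > 0$. Passing to the supremum over $\mu$ on the left and using the entropy duality (with $f = Q_t^\ve\varphi/C$) turns this into $C\log\int \exp(Q_t^\ve\varphi/C)\,\De m \le \int\varphi\,\De m$. One then substitutes $\psi = \varphi/C$ and uses the HJB representation \eqref{hjb1} together with the scaling \eqref{timechange} to verify the time-change identity $Q_t^\ve(C\psi)/C = Q^{\ve/C}_{Ct}\psi$, which brings the previous inequality to the form~(iii).

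For (i)~$\Leftrightarrow$~(ii), I would first replace $\ecostt(\mu,m)$ by $\ecostt(m,\mu)$ via \eqref{remarkm} before applying \eqref{kanto}, which has the effect of putting $Q_t^\ve\varphi$ under the expectation against $m$. The inequality \eit then becomes
\[ \int Q_t^\ve\varphi\,\De m \;\le\; \int\varphi\,\De\mu \;+\; \ve\tle(t)\,\ment(\mu|m), \qquad \forall\, \mu,\varphi. \]
Since the left-hand side is independent of $\mu$, (i) is equivalent to this left-hand side being dominated by the \emph{infimum} of the right-hand side; recognising this infimum through the inf-form of entropy duality $\inf_\mu\{\int\varphi\,\De\mu+\alpha\ment(\mu|m)\} = -\alpha\log\int e^{-\varphi/\alpha}\,\De m$ (with $\alpha = \ve\tle(t)$) and exponentiating delivers (ii). The main (minor) obstacle is the time-change identity for the HJB semigroup used in deriving (iii); the remainder is careful bookkeeping of the constants $\ve\tle(t)$ and $C$, including the verification that $C>0$ so that the variational formula is applied in the correct direction.
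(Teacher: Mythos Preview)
Your proposal is correct and follows essentially the same route as the paper: for (ii) you exploit the symmetry $\ecostt(\mu,m)=\ecostt(m,\mu)$ in \eqref{kanto} so that $Q_t^\ve\varphi$ sits under $\De m$, then optimize over $\mu$ via \eqref{HJBduality2}; for (iii) you use \eqref{kanto} directly with $\nu=m$, optimize over $\mu$, and finish with the scaling $\frac{1}{C}Q_t^\ve(C\psi)=Q_{Ct}^{\ve/C}\psi$, exactly as the paper does.
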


Remark that letting $\ve \rightarrow 0$ in \eqref{HJBcontraction1} gives back, at least formally, the above mentioned characterization of \ti.

\begin{proof}
We follow the same arguments as in the proof of Theorem~\ref{reversehyp}. Again, w.l.o.g. we fix $t=1$.  To prove $(ii)$, we multiply \eituno by $\ve$ and recall that according to the Kantorovich dual formulation \eqref{kanto} and the symmetric property for the entropic cost we have, 
$$
\ve\ecostone(\mu,m)= \sup_{\varphi \in C_b(\RD)}\left\{ \int Q_1^\ve \varphi \De m-\int \varphi \De\mu \right\}. 
$$
Plugging this into \eituno yields the equivalent formulation
$$
\forall \varphi \in C_b(\RD), \quad \int Q_1^\ve \varphi \De m -\int \varphi \De \mu -\ve \theta_{\lambda\ve}(1)\ment (\mu|m)\leq 0.
$$
This can be re-written as, 
$$
\forall \varphi \in C_b(\RD), \quad \frac{1}{\ve\theta_{\lambda\ve}(1)}\int Q_1^\ve\varphi \De m+\left(\int -\frac{\varphi}{\ve\theta_{\lambda\ve}(1)}\De \mu-\ment(\mu|m)\right)\leq 0.
$$
Taking the supremum over $\mu$ and exponentiating, we obtain the desired result thanks to \eqref{HJBduality2}. The proof of $(iii)$ is analogue. We start from the Kantorovich formulation of the entropic cost \eqref{kanto} to obtain that \eituno is equivalent to the property that for all $\varphi \in C_b(\RD)$ and $\mu \in \cP(\RD)$,
\[
 \ve\ment(\mu|m) +\sup_{\varphi \in C_b(\RD)} \left\{ \int Q^{\ve}_1\varphi\De \mu -\int\varphi \De m \right\} \leq \ve \theta_{\lambda \ve}(1) \ment(\mu|m).
\]
Rearranging terms, taking sumpremum over $\mu$, using \eqref{HJBduality2} we arrive at the following equivalent form of \eituno

$$
\forall \varphi \in C_b(\RD), \quad \ve(\theta_{\lambda\ve}(1)-1)\log\int\exp\left(\frac{Q_1^\ve\varphi}{\ve(\theta_{\lambda\ve}(1)-1)}\right) \De m -\int \varphi \De m\leq 0. 
$$
The conclusion follows by exponentiating, setting $\psi= \varphi/C$ and an application of the scaling relation (see \eqref{timechange})

$$
\frac{1}{C}Q_t^{\ve} (C \psi) =Q_{tC}^{\ve/C}(\psi).
$$

\end{proof}

\subsection{Properties of entropic Talagrand inequalities}
In the next lines, we investigate tensorization  of \eit and \ei. In what follows we adopt the following convention: if $p(x,\cdot)$ is a probability kernel on $\R^{d_1} \times \ldots\times \R^{d_n}$ we write $p_i(x,\cdot) \in \cP(\R^{d_i})$ for the $i$-$th$ marginal distribution of $p(x,\cdot)$.

\begin{prop}[Tensorization: first form]\label{tenso1}
Let $n \in \N$, $1 \leq i \leq  n$ and $m_i(\De x) =\exp(-2U_i(x))\De x \in \cP(\R^{d_i})$  such that Assumption \eqref{assu} is satisfied  and satisfy \ei. Then $m= m_1 \otimes \ldots \otimes m_n$ satisfies  \ei.
\end{prop}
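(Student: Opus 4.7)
The plan is to induct on $n$, reducing the problem to the case $n=2$; set $m=m_1\otimes m_2$ on $\R^{d_1}\times\R^{d_2}$. The decisive structural observation is that the Langevin SDE \eqref{langevin} driven by the product potential $U(x_1,x_2)=U_1(x_1)+U_2(x_2)$ splits into two independent one-factor Langevin dynamics, so the reference joint law factorises as $R^{\ve}_{0t}=R^{\ve,1}_{0t}\otimes R^{\ve,2}_{0t}$, and the transition kernel decomposes accordingly, $r^{\ve}_t(x_1,x_2;\cdot)=r^{\ve,1}_t(x_1,\cdot)\otimes r^{\ve,2}_t(x_2,\cdot)$.

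Given $\mu,\nu\in\cP(\R^{d_1}\times\R^{d_2})$ with finite relative entropy w.r.t.\ $m$, I would disintegrate them along the first coordinate, $\mu(\De x_1\,\De x_2)=\mu_1(\De x_1)\,\mu_{2|1}(x_1,\De x_2)$ and analogously for $\nu$. I then build an admissible coupling by gluing: pick an SP-optimiser $\pi_1\in\Pi(\mu_1,\nu_1)$ with reference $R^{\ve,1}_{0t}$, together with a measurable family of SP-optimisers $\pi_{2|1}(x_1,y_1;\cdot)\in\Pi(\mu_{2|1}(x_1,\cdot),\nu_{2|1}(y_1,\cdot))$ with reference $R^{\ve,2}_{0t}$, and set $\pi(\De x_1\De y_1\De x_2\De y_2):=\pi_1(\De x_1\De y_1)\,\pi_{2|1}(x_1,y_1;\De x_2\De y_2)$, which belongs to $\Pi(\mu,\nu)$ by construction.

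Using the chain rule for the relative entropy against a product reference (Lemma \ref{additiveppty}) I would obtain
\begin{equation*}
\ecostt(\mu,\nu)\le\ment(\pi|R^{\ve}_{0t})=\ment(\pi_1|R^{\ve,1}_{0t})+\int\ment(\pi_{2|1}(x_1,y_1;\cdot)|R^{\ve,2}_{0t})\,\pi_1(\De x_1\De y_1),
\end{equation*}
and the two terms on the right are precisely the entropic transportation costs of the marginals and of the conditionals. Applying \ei for $m_1$ to the first term and \ei for $m_2$ pointwise in $(x_1,y_1)$ inside the integral, then integrating against $\pi_1$ (whose marginals are $\mu_1$ and $\nu_1$), and finally recombining the conditional entropies via the chain-rule identity $\ment(\mu|m)=\ment(\mu_1|m_1)+\int\ment(\mu_{2|1}(x_1,\cdot)|m_2)\mu_1(\De x_1)$ together with its analogue for $\nu$, one recovers \ei for $m$. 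Iterating this two-factor step yields the general case.

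The principal technical point is ensuring the measurability of the family $(x_1,y_1)\mapsto\pi_{2|1}(x_1,y_1;\cdot)$. This is a standard measurable-selection issue for the Schr\"odinger problem on Polish spaces and can be handled via uniqueness of the optimiser combined with Borel measurable-selection theorems; alternatively, one can sidestep it entirely by reformulating the target inequality through Lemma \ref{entropygozlan} in the asymmetric $\mu$-centred form $\ecostt(\nu|\mu)\le(\tle(s)-1)\ment(\mu|m)+\tle(t-s)\ment(\nu|m)$, which falls into the general class of costs \eqref{gozlancosts} studied in \cite{gozlan2017kantorovich} and for which a tensorisation principle is already available.
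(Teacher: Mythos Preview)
Your proposal is correct and follows essentially the same route as the paper: reduce by induction to $n=2$, exploit the product structure $R^{\ve}_{0t}=R^{\ve,1}_{0t}\otimes R^{\ve,2}_{0t}$, build the glued coupling from an optimiser on the first factor together with a $(x_1,y_1)$-indexed family of optimisers on the conditionals, apply the chain rule for relative entropy, use \ei on each factor, and recombine via the disintegration of $\ment(\mu|m)$ and $\ment(\nu|m)$. The only cosmetic difference is that the paper first rewrites \ei in the asymmetric kernel form $\ecostt(\nu|\mu)\le(\tle(s)-1)\ment(\mu|m)+\tle(t-s)\ment(\nu|m)$ via Lemma~\ref{entropygozlan} before carrying out the same computation, whereas you work directly with the symmetric cost; your remark on measurable selection is in fact more explicit than the paper, which simply asserts existence of the optimal kernels.
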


{ We recall that in the above proposition  the entropic cost for $m_1 \otimes \ldots \otimes m_n$ is the one corresponding to the law of $n$ independent diffusions of the form \eqref{langevin} associated with the potentials $U_i$, $i=1,\ldots,n$.}

\begin{proof}
We assume again w.l.o.g. that $t=1$. Recall that \eiuno for $m_i$ has the equivalent form

\[ \mathcal{T}_{R^{\ve,i}_{01}}(\nu|\mu) \leq (\tle(s)-1)\ment(\mu|m_i) + \tle(1-s)\ment(\nu|m_i) \]
where $R^{\ve,i}_{01}$ is the two times law of the Langevin dynamics for $m_i$. By induction, it is also enough to consider only the case $n=2$.
Consider now $\mu,\nu \in \cP(\R^{d_1+d_2})$, and assume that 
$\ecostone(\mu,\nu)<+\infty$. Then, there exist an optimal kernel $\tilde{p}_1(x_1, \De y_1)$ such that 

\[  \int_{\R^{d_1}} \ment( \tilde{p}_1(x_1, \cdot ) | r^{\ve,1}_1(x_1,\cdot))  \mu_1(\De x_1) = \mathcal{T}_{R^{\ve,1}_{01}}(\nu_1|\mu_1)      \]
  where we denoted $\mu_1,\nu_1 $ the image laws of $\mu$ and $\nu$ through the projection on the first $d_1$ coordinates.\\
 Moreover, for any fixed $x_1,y_1 \in \R^{d_1}\times \R^{d_1}$ there exist an optimal kernel $q^{x_1,y_1}(x_2,\De y_2)$ on $\R^{d_2}$ such that 
 
 \[ \int_{\R^{d_2}}  \ment( q^{x_1,y_1}(x_2, \cdot ) | r^{\ve,2}_1(x_2,\cdot)) \mu(x_1,\De x_2) =   \mathcal{T}_{R^{\ve,2}_{01}}(\nu(\cdot | y_1)|\mu(x_1,\cdot ))            \]
where $\mu(x_1,\cdot)$ ( resp. $\nu(y_1,\cdot)$ ) is the kernel defined via $\mu(\De x_1 \De x_2) = \mu_1(\De x_1) \mu(x_1, \De x_2)$ (resp. $\nu(\De y_1 \De y_2) = \nu_1(\De y_1) \nu(y_1, \De y_2)$).\\
We can construct a coupling $\pi$ of $\mu$ and $\nu$ setting, 

\be\label{tens2} \pi(\De x \De y) =  
\mu(\De x_1 \De x_2 ) p(x,\De y)  \quad \, p(x,\De y)= \tilde{p}_1(x_1,\De y_1) q^{x_1,y_1}(x_2, \De y_2).  \ee 
Note that for any $x$ we have $p_1(x,\cdot)=\tilde{p}_1(x_1,\cdot )$ and

\[ p_2(x,\cdot) = \int_{\R^{d_1}} \tilde{p}_1(x_1,\De y_1) q^{x_1,y_1}(x_2, \cdot). \]
Since the Langevin dynamics for $m_1 \times m_2$ is the product of the Langevin dynamics for $m_1$ and $m_2$ we have

\bes
\ecostone(\mu,\nu) \leq \int_{\R^{d_1+d_2}} \ment( \tilde{p}_1(x_1,\cdot) q^{x_1,y_1}(x_2, \cdot)| r^{\ve,1}_{1}(x_1,\cdot) \otimes  r^{\ve,2}_{1}(x_2,\cdot) ) \mu(\De x_1 \De x_2).
\ees
Thanks to the decomposition of the entropy formula \eqref{eq:deco} we have for all $\mu$ almost all $x_1,x_2$

\beas \ment( \tilde{p}_1(x_1,\cdot) q^{x_1,y_1}(x_2, \cdot)| r^{\ve,1}_{1}(x_1,\cdot) \otimes  r^{\ve,2}_{1}(x_2,\cdot) ) = \ment ( \tilde{p}_1(x_1,\cdot) |r^{\ve,1}_{1}(x_1,\cdot)) \\
+ \int_{\R^{d_1}}  \ment(q^{x_1,y_1}(x_2, \cdot)| r^{\ve,2}_1(x_2,\cdot)) \tilde{p}_1(x_1,\De y_1) .     \eeas
Plugging this into the above formula  and using the optimality of the couplings yields

\beas
\ecostone(\mu,\nu) \leq  \mathcal{T}_{R^{\ve,1}_{01}}(\nu_1|\mu_1) \\
+\int_{\R^{d_1+ d_1}}\left[ \int_{\R^{d_2}}  \ment(q^{x_1,y_1}(x_2, \cdot)| r^{\ve,2}_1(x_2,))\mu(x_1,\De x_2) \right] \tilde{p}_1(x_1,\De y_1) \mu_1(\De x_1) \\
=\mathcal{T}_{R^{\ve,1}_{01}}(\nu_1|\mu_1) + \int_{\R^{d_1+ d_1}}\mathcal{T}_{R^{\ve,2}_{01}}(\nu(y_1,\cdot )|\mu(x_1,\cdot)) \tilde{p}_1(x_1,\De y_1) \mu_1(\De x_1).
\eeas
Applying \eiuno  and the fact that $\mu_1(x_1)\tilde{p}_1(x_1,\De y_1) = \nu_1(\De y_1)$ we get

\beas
\ecostone(\mu,\nu) \leq  (\tle(s)-1)\left[ \ment(\mu_1|m_1)+ \int_{\R^{d_1+ d_1}}\ment(\mu(x_1,\cdot)|m_2) \tilde{p}_1(x_1,\De y_1) \mu_1(\De x_1)  \right] +\\
\tle(1-s)\left[ \ment(\nu_1|m_1)+ \int_{\R^{ d_1+ d_1}}\ment(\nu(y_1,\cdot )|m_2)  \tilde{p}_1(x_1,\De y_1) \mu_1(\De x_1) \right] \\
= (\tle(s)-1) \left[ \ment(\mu_1|m_1)+ \int_{\R^{ d_1}}\ment(\nu(\cdot | y_1)|m_2)  \mu_1(\De x_1) \right] \\
+ \tle(1-s)\left[ \ment(\nu_1|m_1)+ \int_{\R^{ d_1}}\ment(\nu(\cdot | y_1)|m_2)  \nu_1(\De y_1) \right]\\
= (\tle(s)-1) \ment(\mu|m_1 \otimes m_2) +\tle(1-s) \ment(\nu|m_1 \otimes m_2)
\eeas
where the last equality follows from the decomposition of the entropy formula \eqref{eq:deco}. 

\end{proof}
 
 A second form of tensorization holds, following \cite{gozlan2017kantorovich}. 

\begin{prop}[Tensorization: second form]\label{tenso2}
Let $n \in \N$, $1 \leq i \leq  n$ and $m_i \in \cP(\R^{d_i})$ { such that Assumption \eqref{assu} is satisfied}  and satisfy \ei. Then $m= m_1 \otimes \ldots \otimes m_n$ satisfies  the following inequality

\be\label{tens1}
\forall \mu, \nu \in \mathcal P(\mathbb R^{d_1+\dots +d_n})\quad \bar{\mathcal{T}} (\nu|\mu) \leq  (\tle(s)-1) \ment(\mu|m) + \tle(1-s) \ment(\nu|m),
\ee
where 

\[\bar{\mathcal{T}}(\nu|\mu) = \inf_{\pi \in \Pi(\mu,\nu)} \int \sum_{i=1}^n \ment( p_i(x_i,\cdot) |  ) \, \mu(\De x ). \]

\end{prop}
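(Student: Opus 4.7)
The plan is to reduce to $n=2$ by induction and then construct an explicit coupling by disintegrating $\mu$ and $\nu$ along their first marginals. Write $\mu(\De x_1 \De x_2) = \mu_1(\De x_1)\mu_2(\De x_2 | x_1)$ and $\nu(\De y_1 \De y_2) = \nu_1(\De y_1)\nu_2(\De y_2 | y_1)$. Using \ei for $m_1$ (in its equivalent conditional form via Lemma~\ref{entropygozlan}), select a near-optimal kernel $\tilde p^1(x_1, \De y_1)$ for $\cT_{R^{\ve,1}_{0t}}(\nu_1|\mu_1)$. Next, by a measurable selection argument, for each pair $(x_1,y_1)$ choose a near-optimal kernel $\tilde p^{x_1,y_1}_2(x_2, \De y_2)$ for $\cT_{R^{\ve,2}_{0t}}(\nu_2(\cdot|y_1)|\mu_2(\cdot|x_1))$. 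The candidate coupling is
\[\pi(\De x \De y) := \mu(\De x)\,\tilde p^1(x_1,\De y_1)\,\tilde p^{x_1,y_1}_2(x_2,\De y_2),\]
which one checks belongs to $\Pi(\mu,\nu)$.

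The associated kernel is $p(x,\De y) = \tilde p^1(x_1,\De y_1)\tilde p^{x_1,y_1}_2(x_2,\De y_2)$. Its first marginal equals $p_1(x,\cdot) = \tilde p^1(x_1,\cdot)$, so by the choice of $\tilde p^1$ and \ei for $m_1$,
\[\int \ment\!\big(p_1(x,\cdot)\,\big|\,r^{\ve,1}_t(x_1,\cdot)\big)\,\mu(\De x) \leq (\tle(s)-1)\ment(\mu_1|m_1)+\tle(1-s)\ment(\nu_1|m_1).\]
Its second marginal is $p_2(x,\cdot) = \int \tilde p^1(x_1,\De y_1)\,\tilde p^{x_1,y_1}_2(x_2,\cdot)$; exploiting convexity of $q \mapsto \ment(q\,|\,r^{\ve,2}_t(x_2,\cdot))$ and then applying \ei for $m_2$ to the pair $\mu_2(\cdot|x_1),\nu_2(\cdot|y_1)$ gives
\begin{align*}
\int \ment\!\big(p_2(x,\cdot)\,\big|\,r^{\ve,2}_t(x_2,\cdot)\big)\,\mu(\De x) &\leq (\tle(s)-1)\int \ment(\mu_2(\cdot|x_1)|m_2)\,\mu_1(\De x_1) \\
&\quad + \tle(1-s)\int \ment(\nu_2(\cdot|y_1)|m_2)\,\nu_1(\De y_1),
\end{align*}
where the integrals against $\nu_1$ appear because $\mu_1(\De x_1)\tilde p^1(x_1,\De y_1)$ has $\nu_1$ as its $y_1$-marginal.

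Summing the two bounds and invoking the decomposition of entropy formula \eqref{eq:deco} applied separately to $\mu$ vs.\ $m_1\otimes m_2$ and to $\nu$ vs.\ $m_1\otimes m_2$ collapses the right-hand side into $(\tle(s)-1)\ment(\mu|m_1\otimes m_2)+\tle(1-s)\ment(\nu|m_1\otimes m_2)$, which is exactly \eqref{tens1}. The main technical obstacle is the joint measurable selection of the family $\{\tilde p^{x_1,y_1}_2\}_{(x_1,y_1)}$; this is handled by lower semicontinuity of the entropic cost and standard measurable selection theorems applied to $\eta$-optimizers, letting $\eta\to 0$ at the end. The case where either entropy on the right is infinite is trivial.
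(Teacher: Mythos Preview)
Your proof is correct and follows essentially the same route as the paper's: reduction to $n=2$, construction of the coupling $\pi(\De x\,\De y)=\mu(\De x)\,\tilde p^1(x_1,\De y_1)\,\tilde p^{x_1,y_1}_2(x_2,\De y_2)$ from (near-)optimal kernels on each factor, use of convexity of relative entropy to handle the mixed second marginal $p_2(x,\cdot)$, and the entropy decomposition formula to recombine. The only cosmetic differences are that the paper invokes exact optimizers rather than $\eta$-approximate ones and does not discuss measurable selection explicitly.
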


\begin{proof}
The proof follows the same lines as the former one. As before, we can restrict to $n=2$, and construct the coupling 
$\pi$ via \eqref{tens2}. Note that 

\[ p_2(x,\cdot) = \int_{\R^{d_1}} \tilde{p}_1(x_1,\De y_1) q^{x_1,y_1}(x_2, \cdot) \]
 We have
 
 \beas 
\bar{\mathcal{T}}(\nu|\mu) \leq \int_{\R^{d_1+d_2}} \ment(p_1(x,\cdot)|r^{\ve,1}_{1}(x_1,\cdot)) \mu(\De x) + \int_{\R^{d_1+d_2}} \ment(p_2(x,\cdot)|r^{\ve,2}_{1}(x_2,\cdot)) \mu(\De x)\\
=\int_{\R^{d_1+d_2}} \ment(\tilde{p}_1(x_1,\cdot)|r^{\ve,1}_{1}(x_1,\cdot)) \mu_1(\De x_1)  \\
+\int_{\R^{d_1+d_2}} \ment\left(\int_{\R^{d_1}} \tilde{p}_1(x_1,dy_1)q^{x_1,y_1}(x_2,\cdot)|r^{\ve,2}_{1}(x_2,\cdot)\right) \mu(x_1,\De x_2) \mu_1(\De x_1)\\
\leq \mathcal{T}_{R^{\ve,1}_{01}}(\nu_1|\mu_1)  \\
+ \int_{\R^{d_1+ d_1+d_2}} \ment\left( q^{x_1,y_1}(x_2,\cdot)|r^{\ve,2}_{1}(x_2,\cdot)\right) \mu(x_1,\De x_2) \tilde{p}_1(x_1,dy_1) \mu_1(\De x_1) \\
= \mathcal{T}_{R^{\ve,1}_{01}}(\nu_1|\mu_1) + \\
 \int_{\R^{d_1+ d_1}}\mathcal{T}_{R^{\ve,1}_{01}}(\nu(y_1,\cdot)|\mu(x_1,\cdot))  \tilde{p}_1(x_1,\De y_1) \mu_1(\De x_1),
 \eeas
 { where the last inequality is a consequence of the convexity of the relative entropy.}
From now on, the proof goes as in the former proposition.
\end{proof}
It can be easily seen that Propositions~\ref{tenso1} and~\ref{tenso2} are valid also for \eit. However, we propose here an alternative proof of the tensorization property for \eit, in the same spirit of \cite[Prop.~9.2.4]{bakry2013analysis}. 

\begin{prop}[Tensorization: third form]\label{tensogentil}
Let $n \in \N$, $1 \leq i \leq  n$ and $m_i \in \cP(\R^{d_i})$ { such that Assumption \eqref{assu} is satisfied}  and satisfy \eit. Then $m_1\otimes \ldots \otimes m_n$ satisfies \eit.
\end{prop}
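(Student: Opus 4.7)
The plan is to invoke the dual characterization (iii) from Theorem~\ref{EItequivalentfroms}, which reformulates \eit as a contraction property for the HJB semigroup, and tensorize this property directly in the spirit of \cite[Prop.~9.2.4]{bakry2013analysis}. By induction it suffices to treat $n=2$; I set $m := m_1 \otimes m_2$, $C := \varepsilon(\theta_{\lambda\varepsilon}(t)-1)$, $\tilde\varepsilon := \varepsilon/C$, $\tilde t := Ct$, and write $Q^{\tilde\varepsilon,i}_{\tilde t}$ (resp.\ $P^{\tilde\varepsilon,i}_{\tilde t}$) for the HJB (resp.\ heat) semigroup attached to $m_i$. Theorem~\ref{EItequivalentfroms}(iii) applied to $m_1$ and $m_2$ yields
\be\label{tensoq}
\int \exp\!\bigl(Q^{\tilde\varepsilon,i}_{\tilde t}\chi\bigr) \De m_i \leq \exp\!\left(\int \chi \, \De m_i\right), \qquad i \in \{1,2\},
\ee
for every $\chi \in C_b(\R^{d_i})$, and my goal is to upgrade these to the analogous contraction for the full semigroup $Q^{\tilde\varepsilon}_{\tilde t}$ and the product measure $m$.

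I will need two structural facts. First, $Q^{\tilde\varepsilon}_{\tilde t}$ factorizes on the product: because the Langevin dynamics \eqref{langevin} for $U = U_1 \oplus U_2$ decouples, $r^{\tilde\varepsilon}_{\tilde t}((x_1,x_2),\cdot) = r^{\tilde\varepsilon,1}_{\tilde t}(x_1,\cdot)\otimes r^{\tilde\varepsilon,2}_{\tilde t}(x_2,\cdot)$, and inserting this into \eqref{hjb1} gives, via Fubini,
\bes
Q^{\tilde\varepsilon}_{\tilde t}\psi(x_1,x_2) = Q^{\tilde\varepsilon,1}_{\tilde t}\bigl[Q^{\tilde\varepsilon,2}_{\tilde t}\psi\bigr](x_1,x_2).
\ees
Second, $Q^{\tilde\varepsilon,2}_{\tilde t}$ obeys a Jensen-type inequality: with $\tilde\psi(x_2) := \int \psi(x_1,x_2)\,m_1(\De x_1)$,
\be\label{jensenq}
Q^{\tilde\varepsilon,2}_{\tilde t}\tilde\psi(x_2) \geq \int Q^{\tilde\varepsilon,2}_{\tilde t}\psi(x_1,\cdot)(x_2)\,m_1(\De x_1).
\ee
I derive \eqref{jensenq} from the continuous H\"older inequality $P\bigl[\exp \int \log u(\cdot,y)\mu(\De y)\bigr] \leq \exp \int \log Pu(\cdot,y)\mu(\De y)$ applied to $P=P^{\tilde\varepsilon,2}_{\tilde t}$, $u=\exp(-\psi/\tilde\varepsilon)$, and $\mu=m_1$, followed by taking $-\tilde\varepsilon\log$ of both sides.

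With these tools in hand, tensorization reduces to a two-step integration. Starting from the factorization and Fubini,
\bes
\int \exp\!\bigl(Q^{\tilde\varepsilon}_{\tilde t}\psi\bigr)\De m = \int\! \left[\int \exp\!\bigl(Q^{\tilde\varepsilon,1}_{\tilde t}[Q^{\tilde\varepsilon,2}_{\tilde t}\psi(\cdot,x_2)]\bigr)(x_1)\,m_1(\De x_1)\right] m_2(\De x_2),
\ees
I apply \eqref{tensoq} for $m_1$ to the function $x_1\mapsto Q^{\tilde\varepsilon,2}_{\tilde t}\psi(x_1,x_2)$ and bound the inner bracket by $\exp\!\bigl(\int Q^{\tilde\varepsilon,2}_{\tilde t}\psi(x_1,x_2)\,m_1(\De x_1)\bigr)$. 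The Jensen step \eqref{jensenq} combined with monotonicity of $\exp$ lifts the exponent to $Q^{\tilde\varepsilon,2}_{\tilde t}\tilde\psi(x_2)$, and a final application of \eqref{tensoq} for $m_2$ to $\tilde\psi$ produces $\exp\!\bigl(\int \tilde\psi\,\De m_2\bigr) = \exp\!\bigl(\int\psi\,\De m\bigr)$. Chaining the inequalities establishes Theorem~\ref{EItequivalentfroms}(iii) for $m$, hence \eit for $m$.

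The main delicate point I anticipate is justifying \eqref{jensenq} and the various Fubini interchanges under the modest integrability of a bounded continuous $\psi$; a standard truncation plus monotone convergence argument should reduce matters to smooth bounded $\psi$, where the continuous H\"older inequality and the $\log$/$\exp$ manipulations are unproblematic. Everything else is bookkeeping.
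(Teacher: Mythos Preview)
Your proof is correct and follows essentially the same route as the paper: both reduce to $n=2$, invoke Theorem~\ref{EItequivalentfroms}(iii), use the factorization of the HJB semigroup coming from the product structure of the Langevin dynamics, apply the contraction for $m_1$, interpose the Jensen-type inequality for $Q^{\tilde\varepsilon,2}_{\tilde t}$, and finish with the contraction for $m_2$. The only cosmetic difference is that the paper justifies the Jensen step directly from the infimum representation \eqref{hjb} of $Q^{\tilde\varepsilon,2}_{\tilde t}$, whereas you derive it from the continuous H\"older inequality for $P^{\tilde\varepsilon,2}_{\tilde t}$; these are equivalent one-line arguments.
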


\begin{proof}
For any $\ve>0$, let $P^{\ve}_t$, $Q^{\ve}_t$ be the heat and HJB semigroups for $m_1 \times m_2$.
Also, we note $P^{\ve,1}_t$(resp.  $P^{\ve,2}_t$) and $Q^{\ve,1}_t$ (resp.  $Q^{\ve,2}_t$) the same semigroups for $m_1$ (resp. $m_2$). To obtain the result, we show that the equivalent form (iii) in Theorem~\ref{EItequivalentfroms} of \eit holds. To this aim, we observe that, thanks to the fact that the Langevin dynamics for $m_1 \times m_2$ is the product of the Langevin dynamics for $m_1$ and $m_2$, we have for all $\ve,t>0$ and $x_1,x_2 \in \RD$:

\be\label{semigroupnest}  Q^{\ve/c}_{ct} \varphi (x_1,x_2) = Q^{\ve/c,1}_{ct} \left( Q^{\ve/c,2,\cdot}_{ct}\psi \,(x_2) \right)(x_1) \ee
where, for any $(y_1,x_2) \in \RD \times \RD$,
\[ Q^{\ve/c, 2,y_1}_{ct}\psi(x_2) = Q^{\ve/c,2}_{ct}(\psi(y_1,.)) ( x_2). \]
Using \eqref{semigroupnest} and  \eit for $m_1$ we obtain,

\beas
\int  \exp(Q^{\ve/c}_{ct}) \varphi (x_1,x_2) \, m_1 \otimes m_2(\De x_1 \De x_2) \\
\leq \int \exp\left(  \int  Q^{\ve/c,2,x_1}_{ct}\psi(x_2)  \,   \mu(\De x_1) \right) \mu(\De x_2)
\eeas
Using the definition of $Q^{\ve/c,2}_{ct}$ as an infimum \eqref{hjb}, we obtain
\[\int  Q^{\ve/c,2,x_1}_{ct}     \mu(\De x_1)  \leq Q^{\ve/c,2}_{ct}\left( \int\psi(x_1,\cdot)\mu(\De x_1) \right)\, (x_2) . \]
Using this and \eit for $m_2$ we get
\beas
\int \exp\left(  \int  Q^{\ve/c,2,x_1}_{ct}\psi(x_2)     m_1(\De x_1) \right) m_2(\De x_2) \\
\leq \int \exp\left(   Q^{\ve/c,2}_{ct}\left( \int\psi(x_1,\cdot)m_1(\De x_1) \right)(x_2)      \right) m_2(\De x_2) \\
\leq\exp\left(\int\psi(x_1,x_2)m_1 \otimes m_2(\De x_1 \De x_2)    \right)  
\eeas
which is the desired conclusion.

\end{proof}

The tensorization property allows us to give a further characterization of \ei via a dimension free concentration property. Let us first introduce some notation. For $m\in \mathcal P(\mathbb R^d)$ we denote $m^n=\underbrace{m \otimes \ldots \otimes m}_{n\:\textrm{times}} \in \mathcal P(\mathbb R^{d\times n})=\mathcal P(\underbrace{\mathbb R^d\times \ldots \mathbb \times \R^d}_{n\; \textrm{times}})$; for any $t>0$ $R_{0t}^{\ve,n}$ is the joint law of the reference measure with reversing measure $m^n$ and generator $\mathscr{L}^{\ve,n}=\mathscr{L}^{\ve}\oplus \ldots \oplus \mathscr{L}^\ve$, $r_t^{\ve,n}(x,\cdot)$ its Markov kernel and $(P^{\ve,n}_t)_{t\geq0}$ the associated product Markov semigroup. Finally, in accordance with what we did above we define the corresponding HJB semigroup:

\[ Q^{\ve,n}_t\varphi(x)=- \ve \log P_t^{\ve,n}\exp(-\varphi/\ve) (x),\quad \text{for} x\in\mathbb{R}^{d\times n}.\]
For any Borel set $A\subset \mathbb{R}^{d\times n}$ following \cite{gozlan2017kantorovich} we consider 

\be\label{can}
c_A^n(x):=\inf \{\ment(p|r_1^{\ve,n}(x,\cdot)),\, p\in \mathcal P(\mathbb R^{d\times n}), \, p(A)=1 \}, \quad x\in \mathbb R^{d\times n}.
\ee
A standard calculation shows that

\[ c_A^n(x) = -\log r_1^{\ve,n}(x,A) . \]
Moreover, we define for all $u\geq0$, 
\be\label{au}
A_u :=\{x\in \mathbb R^{d\times n} : c_A^n(x)\leq u\} = \{x\in \mathbb R^{d\times n} : r_1^{\ve,n}(x,A) \geq e^{-u} \}.
\ee

\begin{remark}
Note that $A_u$ is not  in general an enlargement of $A$, i.e. $A \nsubseteq A_u$.
\end{remark}

In the next theorem we provide an equivalent characterization of \eiuno in terms of dimension-free concentration. Note that the tensorization result we use here is Proposition \ref{tenso1} and not Proposition \ref{tenso2}, as it is more natural in this context. Thus, our Theorem \ref{concentration} is close in spirit, but different from Theorem 5.1 in \cite{gozlan2017kantorovich}.

\begin{theorem}[Dimension free concentration]\label{concentration}
Let $R^\ve$ be the stationary Markov process for the generator $\mathscr L^\ve$ and {$m \in \cP(\mathbb{R}^d)$ such that Assumption \eqref{assu} is satisfied}. The following are equivalent for $\lambda>0$ and $s\in [0,1]$, 
\begin{enumerate}
    \item[(i)] $m$ satisfies \eiuno;
    \item[(ii)] For any integer $n\geq1$, for all Borel set $A\subset \mathbb{R}^{d\times n}$ and any $u\geq0$ it holds, 
    $$
    m^n(\mathbb R^{d\times n}\setminus A_u^n)^{\theta_{\lambda\ve}(s)-1}\, m^n(A)^{\theta_{\lambda\ve}(1-s)} \leq e^{-u},
    $$
    with $\theta_{\lambda\ve}(s)$ defined at \eqref{thetadef}.
    
    \item[(iii)] For all integers $n\geq1$, for all non-negative $\varphi \in C_b(\mathbb R^{d\times n})$, it holds, 
    $$
    m^n(Q^{\ve,n}_1\varphi>u)^{\ve \,( \theta_{\lambda\ve}(s)-1)}m^n(\varphi\leq v)^{\ve \, \theta_{\lambda\ve}(1-s)}\leq e^{v-u}, 
    $$
    for all  $v\in\mathbb R$ and $u$ s.t. $u-v>0$.
\end{enumerate}
\end{theorem}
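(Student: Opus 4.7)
The plan is to establish the cyclic chain $(i) \Rightarrow (ii) \Rightarrow (iii) \Rightarrow (i)$, and the main obstacle will be closing the loop via $(iii) \Rightarrow (i)$.

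For $(i) \Rightarrow (ii)$, I would first invoke the tensorization result Proposition \ref{tenso1} to lift \eiuno from $m$ to $m^n$ for every $n \geq 1$. Given any Borel $A \subset \R^{d\times n}$ and $u \geq 0$, I would set $B = \R^{d\times n}\setminus A_u^n$ and apply the tensorized \eiuno to the renormalized restrictions $\mu = m^n|_B/m^n(B)$ and $\nu = m^n|_A/m^n(A)$. Direct calculations give $\mathcal H(\mu|m^n) = -\log m^n(B)$ and $\mathcal H(\nu|m^n) = -\log m^n(A)$. For the entropic cost I would use the decomposition of Remark \ref{rem}, $\ecostone(\mu,\nu) = \mathcal H(\mu|m^n) + \ecostone(\nu|\mu)$, together with the lower bound $\ecostone(\nu|\mu) \geq u$: the latter holds because any disintegration kernel $p(x,\cdot)$ of a coupling of $\mu$ and $\nu$ is supported in $A$ for $\mu$-a.e.\ $x$, so the definition \eqref{can} gives $\mathcal H(p(x,\cdot)|r^{\ve,n}_1(x,\cdot)) \geq c^n_A(x)$, and $c^n_A(x) > u$ on $B$. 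Plugging these estimates into the tensorized \eiuno and rearranging yields (ii).

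For $(ii) \Rightarrow (iii)$, given non-negative $\varphi \in C_b(\R^{d\times n})$ and $u > v$, I would set $A = \{\varphi \leq v\}$ and use the representation \eqref{hjb1} to get $P^{\ve,n}_1 e^{-\varphi/\ve}(x) \geq e^{-v/\ve} \, r^{\ve,n}_1(x,A)$. This implies that whenever $Q^{\ve,n}_1\varphi(x) > u$ one has $r^{\ve,n}_1(x,A) < e^{(v-u)/\ve}$, i.e.\ $c^n_A(x) > (u-v)/\ve$ and hence $x \notin A^n_{(u-v)/\ve}$. Thus $\{Q^{\ve,n}_1\varphi > u\} \subset \R^{d\times n}\setminus A^n_{(u-v)/\ve}$; applying (ii) with threshold $(u-v)/\ve$, using $\theta_{\lambda\ve}(s)-1 \geq 0$ to preserve the direction of the inequality, and raising both sides to the $\ve$-th power delivers (iii).

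The hard part will be $(iii) \Rightarrow (i)$, which I would split into two sub-steps. First, to derive (ii) from (iii), I would approximate the discontinuous penalty $(+\infty)\cdot \mathbf 1_{A^c}$ by a monotone sequence of non-negative bounded continuous functions $\varphi_k$ vanishing on (a closed inner approximation of) $A$ and diverging to $+\infty$ off $A$. By dominated convergence $P^{\ve,n}_1 e^{-\varphi_k/\ve}(x) \to r^{\ve,n}_1(x,A)$, hence $Q^{\ve,n}_1 \varphi_k(x) \uparrow \ve c^n_A(x)$; applying (iii) to $\varphi_k$ with $v = 0$, sending $k \to \infty$, rescaling the threshold $u \mapsto u/\ve$, and taking the $\ve$-th root yields (ii). To then close the cycle with $(ii) \Rightarrow (i)$, I would use Lemma \ref{entropygozlan} and Remark \ref{rem} to recast \eiuno in its equivalent asymmetric form
\bes
\ecostone(\nu|\mu) \leq (\theta_{\lambda\ve}(s) - 1) \mathcal H(\mu|m) + \theta_{\lambda\ve}(1-s)\mathcal H(\nu|m),
\ees
which fits the framework of generalized transport-entropy inequalities with the cost \eqref{entropygozlan2} studied in \cite{gozlan2017kantorovich}. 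The dimension-free concentration (ii) then yields the asymmetric inequality via \cite[Thm.~5.1]{gozlan2017kantorovich} (or an adaptation of its proof), the crux of that argument being to approximate the marginal $\nu$ by the empirical measure of $n$ i.i.d.\ samples and to pass to the limit $n \to \infty$ in (ii).
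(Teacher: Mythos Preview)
Your arguments for $(i)\Rightarrow(ii)$ and $(ii)\Rightarrow(iii)$ are exactly the paper's (Marton's conditioning argument after tensorization via Proposition~\ref{tenso1}, then the inclusion $\{Q^{\ve,n}_1\varphi>u\}\subset\{c^n_A>(u-v)/\ve\}$). You even keep track of the $\ve$ in the threshold, which the paper's exposition drops.

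For the closing step the paper does \emph{not} pass back through $(ii)$. It proves $(iii)\Rightarrow(i)$ directly: take separable test functions $\varphi(x_1,\dots,x_n)=\sum_i f(x_i)$, use the product structure of the semigroup to get $Q^{\ve,n}_1\varphi(x)=\sum_i Q^{\ve}_1 f(x_i)$, and observe that the tail bound in $(iii)$ then factorizes into a one-dimensional statement for $m$, raised to the power $1/n$. This reduces the problem to recovering the dual form \eqref{dualeit} of \eiuno from a deviation inequality, and for that final passage (layer-cake plus $n\to\infty$, $\delta\to0$) the paper simply points to \cite[Thm.~5.1]{gozlan2017kantorovich}. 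Your detour $(iii)\Rightarrow(ii)\Rightarrow(i)$ is logically fine and has the small bonus of making the equivalence of $(ii)$ and $(iii)$ explicit, but the paper's route is shorter because it exploits the key algebraic fact (additivity of $Q^{\ve,n}_1$ on separable functions) instead of an approximation argument.

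One genuine caution about your $(ii)\Rightarrow(i)$ sketch. First, as the paper remarks just before the theorem, the enlargement $A_u$ here is built from the \emph{product} kernel $r^{\ve,n}_1$, not from the marginal-sum cost used in \cite{gozlan2017kantorovich}; so a straight citation of their Theorem~5.1 is not available and an adaptation is really needed. Second, the ``crux'' you describe---approximating $\nu$ by the empirical measure of i.i.d.\ samples---is the large-deviation argument of Gozlan's 2009 paper for the Wasserstein cost, and it does not transfer naively to the entropic cost: an empirical measure is discrete, so every admissible kernel $p(x,\cdot)$ is singular with respect to $r^{\ve}_1(x,\cdot)$ and the conditional entropy is $+\infty$. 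The workable route (and the one actually used both in \cite{gozlan2017kantorovich} and in the paper) goes through exponential moments and the dual form~\eqref{dualeit}, not through Sanov-type empirical approximations.
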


\begin{proof}
The proof follows the one of \cite[Thm.~5.1]{gozlan2017kantorovich}. For completeness we recall here some key points. The implication $(i)\Rightarrow(ii)$ is a generalization to the entropic transportation inequality of Marton's argument. Since $m$ satisfies \eiuno then thanks to Prop.~\ref{tenso1} the same holds for $m^n$.  As observed at Remark~\ref{rem}, \eiuno can be equivalently written as
$$
\mathcal T_{R_{01}^{\ve,n}}(\nu|\mu) \leq (\theta_{\lambda\ve}(s)-1)\ment(\mu|m^n) + \theta_{\lambda\ve}(1-s)\ment(\nu|m^n)
$$
for all $\mu,\nu\in\mathcal P(\mathbb R^{d\times n})$. For $A\subset \mathbb R^{d\times n}$ we choose the couple of probability measures $\mu(\De x)=\mathbf{1}_B/m^n(B)m^n(\De x)$ and $\nu(\De x)=\mathbf{1}_A/m^n(A)m^n(\De x)$ where $B=\mathbb R^{d\times n}\setminus A_u$ and $A_u$ is defined at \eqref{au}. Hence direct computations show that $\ment(\mu|m^n)=-\log m^n(B)$ and $\ment(\nu|m^n)=-\log m^n(A)$.
Also, observe that the infimum value in \eqref{can} can be easily computed, providing $c_A^n(x)=-\log r_1^{\ve,n}(x,A).$ Moreover the set $A_u$ can be rewritten as,
$$
A_u = \{x\in \mathbb{R}^{d\times n} : r_1^{\ve,n}(x,A)\geq e^{-u} \}.
$$
To conclude, take any $\pi\in \Pi(\mu,\nu)$ with disintegration kernel $(p_x)_{x\in \mathbb R^{d\times n}}$ then 
$$
\int \ment(p_x|r_1^{\ve,n}(x,\cdot))\mu(\De x) \geq \int c_A^n(x) \mu(\De x)>u.
$$
 The conclusion follows by taking the infimum on the set of couplings of $\mu$ and $\nu$.\\
 For the implication $(ii)\Rightarrow(iii)$, let $\varphi\in C_b(\mathbb R^{d\times n})$ and consider $A=\{\varphi \leq v\}$ for some real $v$. We show that $\{Q_1^{\ve,n}\varphi>u\}\subset\{c_A^n>u-v\}$. Take $x\in \{Q_1^{\ve,n}\varphi>u\}$, then for all $p\in\mathcal P(\mathbb R^{d\times n})$ with $p(A)=1$, and thanks to \eqref{hjb} it holds, 
 $$
 u< \int \varphi \De p +\ve\ment(p|r_1^{\ve,n}(x,\cdot)) \leq v+ \ve\ment(p|r_1^{\ve,n}(x,\cdot)).
 $$
 The conclusion follows by optimizing among all the probability $p\in \mathcal P(\mathbb R^{d\times n})$ such that $p(A)=1$.
 To show the last implication $(iii) \Rightarrow (i)$ we fix for simplicity $n=2$. Let $\delta\in(0,1)$, $f$ a non-negative function on $\mathbb R^d$. Define $\varphi(x)=f(x_1)+f(x_2), x\in \mathbb R^{2\times d}$. Then according to \cite{bakry2013analysis} it can be verified that 
 $Q_1^{\ve,2}\varphi(x)= Q_1^\ve f(x_1)+Q_1^\ve f(x_2).$ Hence one has, 
 \begin{multline*}
      \left(\int \exp\left(\frac{Q_1^\ve f}{(1+\delta)\ve(\theta_{\lambda\ve}(s)-1)}\right)\De m\right)^{\ve(\theta_{\lambda\ve}(s)-1)} \left(\int \exp\left(-\frac{f}{(1-\delta)\ve\theta_{\lambda\ve}(1-s)}\right)\De m\right)^{\ve\theta_{\lambda\ve}(1-s)}=\\\left(\int \exp\left(\frac{Q_1^{\ve,2}\varphi}{(1+\delta)\ve(\theta_{\lambda\ve}(s)-1)}\right)\De m^2\right)^{\ve(\theta_{\lambda\ve}(s)-1)/2} \left(\int \exp\left(-\frac{\varphi}{(1-\delta)\ve\theta_{\lambda\ve}(1-s)}\right)\De m^2\right)^{\ve\theta_{\lambda\ve}(1-s)/2}
  \end{multline*}
  the rest of the proof is the same as \cite[Thm.~5.1]{gozlan2017kantorovich}.

\end{proof}

\subsection{Relation with other functional inequalities}

In this section we shall see how the entropic Talagrand inequality relates to other well known functional inequalities. First, we provide a new proof via the entropic Talagrand inequality of the fact that the Logsrithmic Sobolev inequality implies Talagrand's inequality. This seminal result was first proven by Otto and Villani in \cite{otto2000generalization} . In particular, we show that 
$$
\textrm{log-Sobolev ineq.} \Rightarrow \textrm{\ei} \Rightarrow \textrm{\ti
}.$$
Our argument may be seen as a generalization to the HJB semigroup of the alternative proof of Otto and Villani's result given in \cite{bobkov2001hypercontractivity}.

\begin{cor}\label{ottovillani}
For any $\lambda,\ve,t>0$ we have the following relations 
\begin{enumerate}
    \item[(i)] If $m$ satisfies the log-Sob. inequality with constant $1/\lambda$ then it satisfies~\ei for any $s\in[0,t]$.
    \item[(ii)] If $m$ satisfies {\bf ETI}($\lambda,\ve,1$), then it satisfies \tii$(1/(2\ve\tle(1)-\ve))$.
    \item[(iii)] If the potential $U$ in $m=\exp(-2U)$ is two times continuously differentiable and $\lambda$-convex, then $m$ satisfies \eitd.
    { \item[(iv)] If $m$ satisfies~\ei for all $s\in[0,t]$}, then it satisfies the Poincar\'e inequality 
    \bes
    \forall g\in \mathcal{D}, \quad \int_{\RD} g^2(x)\, m(\De x) - \left(\int_{\RD}g(x)\,m(\De x)\right)^2 \leq \frac{2}{\lambda} \int_{\RD} |\nabla g|^2(x) \, m(\De x),
    \ees
    where $\mathcal{D}$ is the domain of the generator $\mathscr{L}=\frac{1}{2}\Delta -\nabla U\cdot \nabla $.
\end{enumerate}
\end{cor}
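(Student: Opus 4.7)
I will prove the four items in turn. Items (i), (ii), and (iii) are all short appeals to the characterisations of \ei and \eit established earlier, while (iv) is a simple linearisation of the reverse hypercontractivity of Theorem~\ref{reversehyp}.

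For (i), Bakry's theorem \cite[Thm.~3.3]{bakry1994hypercontractivite} asserts that the logarithmic Sobolev inequality with constant $1/\lambda$ is equivalent to the full reverse hypercontractivity statement $\|P^\ve_t f\|_q\geq \|f\|_p$ on all pairs $(p,q)$ with $p,q<1$ and $(q-1)/(p-1)=e^{\lambda\ve t}$. Restricting $(p,q)$ to the subset $[0,1)\times(-\infty,0]$ yields precisely condition (ii) of Theorem~\ref{reversehyp}, hence \ei for every $s\in[0,t]$. For (iii), the assumption $\mathbf{Hess}\,U\geq \lambda\,\mathbf{id}$ means that the potential $V:=2U$ of $m=e^{-V}$ satisfies $\mathbf{Hess}\,V\geq 2\lambda\,\mathbf{id}$, so by the classical Bakry--\'Emery criterion $m$ satisfies a log-Sobolev inequality with constant $1/(2\lambda)$; applying (i) with $\lambda$ replaced by $2\lambda$ delivers \eitd.

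For (ii), I combine \eituno with the domination recalled in Remark~\ref{domination}. Taking $\nu=m$ there gives $\ve\,\ecostone(\mu,m)\geq \tfrac{\ve}{2}\ment(\mu|m)+\tfrac{1}{2}W_2^2(\mu,m)$; plugging in \eituno, namely $\ecostone(\mu,m)\leq\theta_{\lambda\ve}(1)\,\ment(\mu|m)$, and rearranging yields $W_2^2(\mu,m)\leq(2\ve\theta_{\lambda\ve}(1)-\ve)\,\ment(\mu|m)$, that is, \tii$(1/(2\ve\theta_{\lambda\ve}(1)-\ve))$.

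For (iv), I exploit Theorem~\ref{reversehyp}(ii) via linearisation. I choose the admissible pair $(p,q)=(0,\,1-e^{\lambda\ve t})$, which lies in $[0,1)\times(-\infty,0]$ and satisfies the required ratio, and set $f=1+\delta g$ for a fixed $g\in\cD$ with $\int g\,\De m=0$. Expanding $\|P^\ve_t f\|_q\geq \|f\|_p$ to second order in $\delta$, the constant and $\delta$-linear contributions cancel by the invariance of $m$ and by the mean-zero condition, while the $\delta^2$ coefficients produce the exponential contraction
\bes
\int (P^\ve_t g)^2\,\De m \;\leq\; e^{-\lambda\ve t}\int g^2\,\De m.
\ees
Differentiating at $t=0$ and using $\int g\,\mathscr{L} g\,\De m = -\tfrac12\int|\nabla g|^2\,\De m$ then yields the stated Poincar\'e inequality. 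The only mildly delicate step is the second-order expansion bookkeeping; everything else is a direct appeal to Theorem~\ref{reversehyp} or Remark~\ref{domination}.
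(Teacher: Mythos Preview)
Your arguments for (i), (ii), and (iii) are essentially identical to the paper's: an appeal to Bakry's equivalence between log-Sobolev and reverse hypercontractivity combined with Theorem~\ref{reversehyp} for (i), Remark~\ref{domination} for (ii), and the Bakry--\'Emery criterion feeding into (i) for (iii). Nothing to add there.

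For (iv) you take a slightly different linearisation than the paper. The paper follows Gross and differentiates $s\mapsto\|P^\ve_s f\|_{q(s)}$ at $s=0$ along the curve $p=0$, $q(s)=1-e^{\lambda\ve s}$; you instead fix the single admissible pair $(p,q)=(0,\,1-e^{\lambda\ve t})$, perturb $f=1+\delta g$, and read off the $\delta^2$ term. Your route has the advantage of being completely elementary and of producing the clean intermediate statement
\bes
\int (P^\ve_t g)^2\,\De m \;\leq\; e^{-\lambda\ve t}\int g^2\,\De m \quad\text{for all mean-zero }g,
\ees
which is correct. However, your final sentence ``differentiating at $t=0$'' is not justified as written: the hypothesis gives you Theorem~\ref{reversehyp}(ii) only at the \emph{fixed} time $t$, so you have the $L^2$ contraction above only for that single $t$, not for a neighbourhood of $0$. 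The repair is immediate and arguably cleaner than differentiating: since $(P^\ve_t)_{t\geq0}$ is symmetric on $L^2(m)$, the spectral theorem gives $\|P^\ve_t\|_{L^2_0\to L^2_0}=e^{-\gamma t}$ with $\gamma$ the spectral gap of $-\mathscr{L}^\ve$; your inequality at the single time $t$ forces $\gamma\geq\lambda\ve/2$, and hence
\bes
\mathrm{Var}_m(g)\;\leq\;\frac{1}{\gamma}\Big(-\!\int g\,\mathscr{L}^\ve g\,\De m\Big)\;=\;\frac{\ve}{2\gamma}\int|\nabla g|^2\,\De m\;\leq\;\frac{1}{\lambda}\int|\nabla g|^2\,\De m,
\ees
which is in fact sharper than the stated constant $2/\lambda$. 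With this one-line fix your argument for (iv) is complete.
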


\begin{proof}
The statement $(i)$ is a natural consequence the equivalence between reverse hypercontractivity and the log-Sob inequality \cite[Thm.~3.3]{bakry1994hypercontractivite}.
Statement $(ii)$ follows by Remark~\ref{domination}, while statement $(iii)$ is a direct consequence of statement $(i)$. { Statement $(iv)$ is obtained following the proof of Gross' Theorem; for this reason, we do not provide full detail. We consider a bounded positive function $f$, bounded away from $0$ and with bounded derivatives of order two. The relation
\eqref{reverse-hyp} with the choices $p=0$ and $q(s)=1-\exp(\lambda\ve s)$ implies that 
$\frac{\De}{\De s}\Lambda(s)\Big|_{s=0} \geq 0$, where $\Lambda(s) = \|f\|_{q(s)}$. One obtains that 

\bes
(\| f\|_0)^{-1} \, \, \frac{\De}{\De s}\Lambda(s)\Big|_{s=0}  = -\frac{\lambda \varepsilon}{2} \left(\int_{\RD} (\log f)^2 \De m -\Big(\int_{\RD} \log f \, \De m\Big)^2\right) + \int_{\RD} \frac{1}{f} \mathscr{L}^{\varepsilon} f\, \De m
\ees
where $\mathscr{L}^{\varepsilon}$ is the generator $\frac{\ve}{2}\Delta - \ve \nabla U \cdot \nabla$. The desired conclusion is obtained by mean of some simple algebraic manipulations, the basic rules of $\Gamma$-calculus and upon setting $g=\log f$. 

}
\end{proof}

{ \begin{remark}
Adopting the notation of Bakry's notes \cite{bakry1994hypercontractivite}
and using Theorem 3.3 therein, we get that the relation
\eqref{reverse-hyp} with the choices $p=0$ and $q(s)=1-\exp(\lambda\ve s)$ implies the inequality $\mathrm{LogS}(0)$ with constant $0$. However, as it can be seen from its Definition at the bottom of page 37, such inequality is a trivial one and therefore we cannot conclude that it implies the classical Log Sobolev inequality $\mathrm{LogS(2)}$. The degeneracy is due to the fact that $q(0)=0$ and is forced by the restrictions imposed in Theorem \ref{reversehyp} on the parameters $p,q$.
\end{remark}}

Combining statements $(i)$ and $(ii)$ and taking the limit $\ve\to0$ we obtain the classical result of Otto and Villani~\cite{otto2000generalization}
$$
\textrm{log-Sobolev ineq.} \Rightarrow \textrm{\ti
}.$$

\begin{remark}
In \cite{otto2000generalization} the authors also introduce a stronger inequality which implies both the log-Sobolev and the Talagrand inequality, leading  the quadratic Wasserstein distance, the Entropy and the Fisher information together: 
$$
\ment(\mu|m) \leq W_2(\mu,m)\sqrt{I(\mu|m)}-\frac{\lambda}{2}W_2^2(\mu,m).
$$
It is interesting to point out that we can derive the entropic counterpart of this result, by differentiating in $s=0$ the convexity estimate for the entropy along Schr\"odinger bridges (see \cite[Thm.~1.4.]{conforti2017second}). 
Let us mention that an alternative proof of the classical HWI inequality is given in~\cite{gentil2018anentropic} via the Schr\"odinger problem. In particular, it is based on the Otto-Villani heuristics applied to Schr\"odinger bridges.

\end{remark}

The next result is a generalization to the entropic transportation inequality of~\cite[Thm.~2.1]{gozlan2011} in which it is introduced an inf-convolution log-Sobolev inequality that is implied by a transportation inequality with a general cost.
 
\begin{cor}[\eit and inf-convolution log-Sobolev inequality]\label{infconv}
For $\ve,\lambda>0$ and $t$ such that,
$$
1+\frac{\ve}{\exp(\lambda\ve t)-(1+\ve)} \geq0,
$$
\eit implies the following inf-convolution log-Sobolev inequality.  For any $f:\manif\to\mathbb R$,
$$
\textrm{\emph{Ent}}_m(e^f)\leq \left( 1+\frac{\ve}{\exp(\lambda\ve t)-(1+\ve)} \right)\int (f-Q_t^\ve f)e^f\De m,
$$
where we used the standard notation $\textrm{\emph{Ent}}_m(f)=\int f\log f \De m- \int f\De m\log \int f\De m.$
\end{cor}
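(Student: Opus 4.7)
The plan is a Bobkov--G\"otze style argument: apply \eit to a tilted probability measure and then invoke Kantorovich duality with a well-chosen test function. Concretely, set $Z:=\int e^{f}\De m$ and define the tilted measure $\mu:=(e^{f}/Z)\,m\in\cP(\RD)$. A short computation from the definitions gives $\ment(\mu|m)=\mathrm{Ent}_{m}(e^{f})/Z$, so that \eit applied to $(\mu,m)$ reads
\bes
\ecostt(\mu,m)\leq \tle(t)\,\mathrm{Ent}_{m}(e^{f})/Z.
\ees

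Next I would couple this with the Kantorovich dual formula \eqref{kanto} with $\nu=m$: for every $\varphi\in C_{b}(\RD)$ one has
\bes
\int Q_{t}^{\ve}\varphi\,\De\mu - \int\varphi\,\De m \leq \ve(\tle(t)-1)\,\ment(\mu|m)=\frac{C}{Z}\,\mathrm{Ent}_{m}(e^{f}),
\ees
where $C:=\ve(\tle(t)-1)=\ve/(e^{\lambda\ve t}-1)$. The key choice is now $\varphi=f$; after multiplying by $Z$ this becomes
\bes
\int Q_{t}^{\ve}f\cdot e^{f}\,\De m - Z\int f\,\De m \leq C\,\mathrm{Ent}_{m}(e^{f}).
\ees

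To close the argument I would lower bound $-Z\int f\,\De m$ via Jensen's inequality: since $Z=\int e^{f}\De m\geq \exp(\int f\De m)$, we have $Z\log Z\geq Z\int f\De m$, and combining with the identity $Z\log Z=\int f e^{f}\De m-\mathrm{Ent}_{m}(e^{f})$ yields $-Z\int f\De m\geq \mathrm{Ent}_{m}(e^{f})-\int f e^{f}\De m$. Inserting into the previous display and rearranging produces
\bes
(1-C)\,\mathrm{Ent}_{m}(e^{f})\leq \int(f-Q_{t}^{\ve}f)\,e^{f}\,\De m.
\ees
A direct algebraic verification shows that the stated hypothesis is equivalent to $1-C>0$ and that $1/(1-C)=1+\ve/(e^{\lambda\ve t}-(1+\ve))$, so dividing delivers the announced inequality.

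There is no genuine obstacle in this scheme: the argument is essentially an algebraic rearrangement once the test function $\varphi=f$ is identified. The only minor technicality is extending the Kantorovich pairing from $\varphi\in C_{b}(\RD)$ to a possibly unbounded measurable $f\colon\RD\to\mathbb R$, which is handled by the standard truncation $f_{n}:=(-n)\vee f\wedge n$ and passing to the limit by monotone convergence.
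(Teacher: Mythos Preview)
Your proposal is correct and follows essentially the same route as the paper's proof: both introduce the tilted measure $\nu_f=(e^{f}/Z)\,m$, use Kantorovich duality \eqref{kanto} with the test function $\varphi=f$ together with \eit to bound $\int Q_t^\ve f\,\De\nu_f-\int f\,\De m$, invoke Jensen's inequality to pass from $\log Z$ to $\int f\,\De m$, and rearrange to $(1-C)\,\mathrm{Ent}_m(e^{f})\leq\int(f-Q_t^\ve f)e^{f}\,\De m$ with $C=\ve(\tle(t)-1)$. Your explicit verification that $1/(1-C)$ equals the stated constant, and your remark on truncating $f$ to handle the unbounded case, are small additions to what the paper writes but do not change the method.
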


\begin{proof}
We start by following the proof of~\cite[Thm.~2.1]{gozlan2011}. We fix $f\in C_b(\manif)$ and define $\De\nu_f=\displaystyle\frac{e^f}{\int e^f\De m} \De m$, hence we have 
\begin{align*}
\ment(\nu_f|m)&= \int \log\left(\frac{e^f}{\int e^f\De m} \right)\frac{e^f}{\int e^f\De m}\De m= \int f \De\nu_f-\log\int e^f\De m\\
& \leq \int f\De \nu_f-\int f\De m = \int (f -Q^\ve_tf) \De\nu_f +\int Q^\ve_tf \De\nu_f-\int f \De m\\
& \leq \int (f-Q^\ve_tf) \De\nu_f + \ve\ecostt(\nu_f,m)-\ve\ment(\nu_f|m)
\end{align*} 
where the first inequality is given by Jensen's inequality, while the last inequality is due to the Kantorovich dual formulation for the entropic transportation cost~\eqref{kanto}. Now~\eit implies,
$$
\ment(\nu_f|m) \leq \int (f-Q^\ve_tf) \De\nu_f + \ve\left(\tle(t)-1\right)\ment(\nu_f|m).
$$
Hence, 
$$
\ment(\nu_f|m)\left(1+\ve-\ve\tle(t) \right)\leq \int 
(f-Q^\ve_t f) \De\nu_f, 
$$
that is
$$
\ment(\nu_f|m)\left( 1-\ve(\tle(t)-1)\right)\leq \int (f-Q^\ve_tf) \De\nu_f.
$$
To conclude, we remark that $\ment(\nu_f|m)=\textrm{Ent}_m(e^f)\int e^f\De m$, thus we obtain the announced inequality.  
\end{proof}

\section{Appendix}

We briefly collect here some known and fundamental result that we made use of in the previous sections.   

\begin{lemma}[Dual representation of the entropy]\label{HJBduality1}
Let $\mathcal Z$ be a measurable space and $p \in \mathcal P(\mathcal Z)$. For any measurable function $\psi :  \mathcal Z\to [-\infty,\infty)$ it holds

\be\label{HJBduality2}
\log \int \exp(\psi) \De p= \sup \Big\{ \int \psi \De q - \ment(q|p);\, q\in \cP(\mathcal Z)\, : \int_{\mathcal Z} f_+dq<\infty \Big\} \in [-\infty,\infty]
\ee

\end{lemma}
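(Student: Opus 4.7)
The plan is to prove the two inequalities separately; this is the classical Donsker–Varadhan variational formula, for which the natural optimizer is the tilted (Gibbs) measure.

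For the inequality $\log \int e^{\psi} \, dp \leq \sup_{q}\{\int \psi \, dq - \mathscr{H}(q|p)\}$, first assume $Z := \int e^{\psi} \, dp \in (0,\infty)$ and define the probability measure $q^{\ast} \in \mathcal{P}(\mathcal{Z})$ by $dq^{\ast}/dp = e^{\psi}/Z$. A direct calculation gives
$$
\mathscr{H}(q^{\ast}|p) = \int \log\frac{e^{\psi}}{Z} \, dq^{\ast} = \int \psi \, dq^{\ast} - \log Z,
$$
so that $\int \psi \, dq^{\ast} - \mathscr{H}(q^{\ast}|p) = \log Z$, achieving equality. The integrability condition $\int \psi_{+} \, dq^{\ast} < \infty$ is automatic since $\int \psi \, dq^{\ast} = \mathscr{H}(q^{\ast}|p) + \log Z < \infty$. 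The degenerate case $Z = 0$ (i.e.\ $\psi = -\infty$ $p$-a.s.) makes the left-hand side $-\infty$, so the inequality is trivial. When $Z = +\infty$, I would truncate: apply the above to $\psi_N := \psi \wedge N$, obtaining $\log \int e^{\psi_N} \, dp \leq \sup_q\{\int \psi_N \, dq - \mathscr{H}(q|p)\} \leq \sup_q\{\int \psi \, dq - \mathscr{H}(q|p)\}$, and let $N \to \infty$ via monotone convergence to conclude that the supremum is $+\infty$.

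For the reverse inequality, fix an arbitrary $q \in \mathcal{P}(\mathcal{Z})$ with $\int \psi_{+} \, dq < \infty$. If $q \not\ll p$, then $\mathscr{H}(q|p) = +\infty$ and there is nothing to show. Otherwise, set $f := dq/dp$; on the set $\{f > 0\}$ (which carries all of $q$) we can write
$$
\int \psi \, dq - \mathscr{H}(q|p) = \int (\psi - \log f) \, dq = \int \log \frac{e^{\psi}}{f} \, dq.
$$
Since $\log$ is concave, Jensen's inequality applied to the probability measure $q$ gives
$$
\int \log \frac{e^{\psi}}{f} \, dq \leq \log \int \frac{e^{\psi}}{f} \, dq = \log \int e^{\psi} \, dp,
$$
where the last equality uses that $dq = f \, dp$ and the convention $0 \cdot (\text{anything}) = 0$ on $\{f = 0\}$. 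Taking the supremum over $q$ gives the desired bound.

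The main obstacle is to handle the rearrangement $\int \psi \, dq - \int \log f \, dq = \int (\psi - \log f) f \, dp$ in the absence of absolute integrability, so as to avoid an undefined $\infty - \infty$. This is dealt with in the standard way by splitting into positive and negative parts of $\psi$ and of $\log f$, using the elementary bound $x \log x \geq -e^{-1}$ to ensure $(\log f)_{-} f$ is $p$-integrable, and exploiting the hypothesis $\int \psi_{+} \, dq < \infty$ to control the positive part. Once these integrability checks are in place, both steps above are rigorous and the identity follows.
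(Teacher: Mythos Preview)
The paper does not give its own proof of this lemma; it simply cites \cite{gozlan2010transport,leonard2014some}. Your argument is the standard Donsker--Varadhan proof via the Gibbs tilt and Jensen's inequality, and is essentially what one finds in those references.

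One small gap: in the case $Z\in(0,\infty)$ you assert that $\int\psi_{+}\,dq^{\ast}<\infty$ ``is automatic since $\int\psi\,dq^{\ast}=\mathscr H(q^{\ast}|p)+\log Z<\infty$'', but you have not verified that $\mathscr H(q^{\ast}|p)<\infty$, and in fact it can fail (e.g.\ take $p$ uniform on $(0,1)$ and $\psi=\log g$ for any probability density $g$ on $(0,1)$ with $\int g\log g\,dx=+\infty$; then $Z=1$ but $\mathscr H(q^{\ast}|p)=+\infty$). In that situation $q^{\ast}$ does not satisfy the admissibility constraint $\int\psi_{+}\,dq<\infty$ and cannot be used as a competitor. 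The fix is immediate: the truncation $\psi_N=\psi\wedge N$ that you already introduce for the case $Z=+\infty$ works verbatim here as well, since for $\psi_N$ bounded above the associated Gibbs measure $q_N^{\ast}$ has finite entropy, the constraint $\int(\psi_N)_{+}\,dq_N^{\ast}<\infty$ is trivially met, and $\log Z_N\uparrow\log Z$ by monotone convergence. With this adjustment the argument is complete.
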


The proof can be found for instance in  \cite{gozlan2010transport, leonard2014some}.

\begin{lemma}[Additive property of the relative entropy]\label{additiveppty} Let $\Omega, Z$ two Polish spaces. For any $p,r\in \mathcal P(\Omega)$ and any measurable function $\phi:\Omega\to Z$, 
\begin{equation}\label{eq:deco}
\ment(p|r) = \ment(p_\phi|r_\phi)+\int \ment(p(\cdot|\phi=z)|r(\cdot|\phi=z))p_\phi(dz)
\end{equation}
where $p_\phi=\phi_\# p$.

\end{lemma}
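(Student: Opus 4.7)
My plan is to establish the identity by reducing to the case where $p \ll r$ and then using a disintegration argument together with the multiplicative structure of the Radon–Nikodym derivative.

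First I would dispose of the degenerate case. If $p \not\ll r$, one needs to check that the right-hand side equals $+\infty$. Either $p_\phi \not\ll r_\phi$, in which case $\ment(p_\phi|r_\phi)=+\infty$ and we are done (the integral term is nonnegative by Jensen), or $p_\phi \ll r_\phi$ but the conditional part blows up; the standard argument shows that $p \ll r$ is equivalent to $p_\phi \ll r_\phi$ together with $p(\cdot|\phi=z) \ll r(\cdot|\phi=z)$ for $p_\phi$-almost every $z$, so in the complementary case the integrand is $+\infty$ on a set of positive $p_\phi$-measure.

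Next, assuming $p \ll r$, I would invoke the disintegration theorem (valid on Polish spaces) to obtain regular conditional probabilities $p(\cdot|\phi=z)$ and $r(\cdot|\phi=z)$ such that
\[
p(d\omega) = p_\phi(dz)\, p(d\omega\,|\,\phi=z), \qquad r(d\omega)=r_\phi(dz)\, r(d\omega\,|\,\phi=z).
\]
The key algebraic fact is then the factorization
\[
\frac{dp}{dr}(\omega) \;=\; \frac{dp_\phi}{dr_\phi}(\phi(\omega))\;\cdot\; \frac{dp(\cdot|\phi=\phi(\omega))}{dr(\cdot|\phi=\phi(\omega))}(\omega),
\]
which holds $r$-a.e.\ and can be verified by checking that the right-hand side gives the correct integral on product-type sets of the form $\phi^{-1}(B)\cap A$ and then extending by a monotone class argument.

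Taking logarithms and integrating against $p$, the identity $\log(ab)=\log a+\log b$ splits $\ment(p|r)$ into two pieces. The first piece, $\int \log\frac{dp_\phi}{dr_\phi}(\phi(\omega))\,p(d\omega)$, becomes $\ment(p_\phi|r_\phi)$ by the change of variables $\phi_\# p = p_\phi$. The second piece, after applying Fubini (justified by the nonnegativity of the integrand once one subtracts a lower bound, or by first establishing the finite case and then passing to general $p$ by truncation), yields exactly $\int \ment(p(\cdot|\phi=z)|r(\cdot|\phi=z))\,p_\phi(dz)$.

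The main technical obstacle is the measurability of the map $z \mapsto \ment(p(\cdot|\phi=z)|r(\cdot|\phi=z))$ and the justification of the Fubini step, since a priori neither sign of the conditional entropy integrand is clear. The cleanest route is to first assume $\log \frac{dp}{dr} \in L^1(p)$, where dominated convergence and standard measurability of Radon–Nikodym derivatives in the parameter $z$ apply directly; the general case then follows by splitting $\log\frac{dp}{dr} = (\log\frac{dp}{dr})_+ - (\log\frac{dp}{dr})_-$ and using Tonelli on the positive part together with the $p \ll r$ hypothesis to control the negative part.
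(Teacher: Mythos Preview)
Your argument is correct and follows the standard route to this classical identity: handle the singular case, disintegrate, factor the Radon--Nikodym derivative, take logs and integrate. The paper does not actually give a proof of this lemma; it simply refers to \cite[Thm.~2.4]{leonard2014some}, so there is no ``paper's own proof'' to compare against beyond noting that your sketch is exactly the kind of argument that reference contains.
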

For the proof see \cite[Thm.~2.4]{leonard2014some}.




\begin{theorem}[Dual formulation of the entropic transportation cost]
$$
\ve\ecostone(\mu,\nu)=\ve \ment(\mu|m)+\sup_{\varphi\in C_b(\mathbb R^d)} \left\{ \int Q_1^\ve \varphi \,\De\mu - \int  \varphi \,\De\nu\right\}
$$
where for all $t\geq0$,
$$
Q_t^\ve\varphi(x) = -\ve\log P_t^\ve \exp(-\varphi/\ve)(x), \;\; x \in \mathbb R^d
$$
\end{theorem}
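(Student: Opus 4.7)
The plan is to use Fenchel--Moreau duality applied to the functional $\nu\mapsto\ve\ecostone(\nu|\mu)$ and then transfer the result to $\ecostone(\mu,\nu)$ through Lemma~\ref{entropygozlan}. By that lemma (which is itself a consequence of the decomposition of entropy, Lemma~\ref{additiveppty}), the stated equality is equivalent to
\bes
\ve\ecostone(\nu|\mu)=\sup_{\varphi\in C_b(\RD)}\left\{\int Q_1^\ve\varphi\,\De\mu-\int\varphi\,\De\nu\right\},
\ees
so I will concentrate on proving this reformulation.

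For the easy inequality ($\leq$), fix $\varphi\in C_b$ and a coupling $\pi\in\Pi(\mu,\nu)$ with disintegration $\pi(\De x\De y)=\mu(\De x)p(x,\De y)$. Applying the dual representation of entropy (Lemma~\ref{HJBduality1}) fiberwise to $p(x,\cdot)$ against $r_1^\ve(x,\cdot)$ with test function $-\varphi/\ve$ gives, for $\mu$-a.e.\ $x$, the bound $Q_1^\ve\varphi(x)\leq\int\varphi\,\De p(x,\cdot)+\ve\ment(p(x,\cdot)|r_1^\ve(x,\cdot))$. Integrating against $\mu$ and taking the infimum over $\pi$ yields the inequality with $\leq$.

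For the reverse inequality, I would apply the Fenchel--Moreau theorem to the convex functional $F:\mathcal M(\RD)\to[0,+\infty]$ defined by $F(\nu)=\ve\ecostone(\nu|\mu)$ if $\nu\in\cP(\RD)$ and $+\infty$ otherwise. Convexity follows from the joint convexity of relative entropy combined with the decomposition formula~\eqref{entropygozlan1}. Weak lower-semicontinuity is the main technical point: given $\nu^n\to\nu$ weakly with $\liminf F(\nu^n)<\infty$, I would pick couplings $\pi^n\in\Pi(\mu,\nu^n)$ that are near-optimal, observe that the family is tight since its first marginal is fixed, extract a weakly convergent subsequence $\pi^n\to\pi\in\Pi(\mu,\nu)$, and use Fatou together with the joint lower-semicontinuity of $(\pi,R^\ve_{01})\mapsto\ment(\pi|R^\ve_{01})$ and~\eqref{entropygozlan1} to conclude $F(\nu)\leq\liminf F(\nu^n)$. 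Fenchel--Moreau then gives
\bes
F(\nu)=\sup_{\varphi\in C_b}\left\{-\int\varphi\,\De\nu-F^*(-\varphi)\right\}.
\ees

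The last step is to identify $F^*(-\varphi)=-\int Q_1^\ve\varphi\,\De\mu$. Expanding,
\bes
F^*(-\varphi)=\sup_{\nu\in\cP(\RD)}\left\{-\int\varphi\,\De\nu-F(\nu)\right\}=-\inf_{x\mapsto p(x,\cdot)}\int\left[\int\varphi\,\De p(x,\cdot)+\ve\ment(p(x,\cdot)|r_1^\ve(x,\cdot))\right]\mu(\De x),
\ees
since sweeping over all pairs $(\nu,\pi)$ with $\pi\in\Pi(\mu,\nu)$ is the same as sweeping over all Markov kernels $p(x,\cdot)$ (the second marginal is then forced). The inner infimum can be performed $x$-wise, and by Lemma~\ref{HJBduality1} its value at $x$ is exactly $Q_1^\ve\varphi(x)=-\ve\log P_1^\ve\exp(-\varphi/\ve)(x)$, with explicit pointwise minimizer $p^*_x(\De y)\propto\exp(-\varphi(y)/\ve)\,r_1^\ve(x,\De y)$, which is automatically a measurable kernel because $\varphi$ is continuous bounded. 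Plugging back in concludes the proof. The principal obstacle is the weak lower-semicontinuity of $F$, as everything else is either the variational formula for entropy applied fiberwise or a standard Fenchel--Moreau argument.
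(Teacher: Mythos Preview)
Your argument is sound and essentially complete; note, however, that the paper does \emph{not} supply its own proof of this theorem. In the appendix the statement is followed only by the sentence ``Several proofs are available'' together with a list of references, so there is no in-paper argument to compare against line by line. What you have written is therefore a self-contained proof that the authors chose to outsource to the literature.

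Your route via Fenchel--Moreau duality applied to $\nu\mapsto\ve\ecostone(\nu|\mu)$, combined with Lemma~\ref{entropygozlan} to pass back to the symmetric cost, is one of the standard ways to obtain this duality and is in the spirit of the references cited (in particular the convex-analytic proofs). The identification of $F^{*}(-\varphi)$ through the fibrewise variational formula for relative entropy is exactly right, and the explicit minimiser $p^{*}_x$ you exhibit shows that the infimum over kernels is actually attained, which closes the argument cleanly.

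One small imprecision worth tightening: you justify tightness of the near-optimal couplings $\pi^n$ by saying ``its first marginal is fixed''. Fixing one marginal is not enough; you also need tightness of the second marginals $\nu^n$, which you do have because $\nu^n\to\nu$ weakly (Prokhorov). With that clause added, the lower-semicontinuity step goes through as you describe, using the lower-semicontinuity of $\pi\mapsto\ment(\pi|R^{\ve}_{01})$ and the decomposition~\eqref{entropygozlan1}.
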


Several proofs are available \cite{MT06, gentil2015analogy, chen2014relation, gentil2018dynamical,gigli2018benamou}.

\bigskip
\bigskip
\bigskip

\noindent{\bf Acknowledgements.}  This research was partially supported  by the LABEX MILYON
ANR-10-LABX-0070. Giovanni Conforti acknowledges financial support from the CMAP, \'Ecole Polytechnique.


\bibliographystyle{plain}
\bibliography{Ref}

\end{document}